\newtheorem{theorem}{Theorem}[section]
\newtheorem{lemma}[theorem]{Lemma}
\newtheorem{corollary}[theorem]{Corollary}
\theoremstyle{definition}
\newtheorem{definition}[theorem]{Definition}
\newtheorem{example}[theorem]{Example}
\newtheorem{obs}[theorem]{Observation}
\newtheorem{remark}[theorem]{Remark}
\newtheorem{question}[theorem]{Question}
\newtheorem{conjecture}[theorem]{Conjecture}
\numberwithin{equation}{section}
\DeclareMathOperator{\n}{N }
\DeclareMathOperator{\von}{von}
\newcommand{\Mod}[1]{ \left(\mathrm{mod}\ #1\right)}
\newcommand{\blank}{$\;\;$}
\newcommand{\F}{\mathbb{F}}
\begin{document}
\title{Isodual and Self-dual Codes from Graphs}

\author{Sudipta Mallik\thanks{Corresponding author} } 
\author{Bahattin Yildiz } 
\affil{\small Department of Mathematics and Statistics, Northern Arizona University, 801 S. Osborne Dr.\\ PO Box: 5717, Flagstaff, AZ 86011, USA 

sudipta.mallik@nau.edu, bahattin.yildiz@nau.edu}

\maketitle
\begin{abstract}
Binary linear codes are constructed from graphs, in particular, by the generator matrix $[I_n | A]$ where $A$ is the adjacency matrix of a graph on $n$ vertices. A combinatorial interpretation of the minimum distance of such codes is given. We also present graph theoretic conditions for such linear codes to be Type I and Type II self-dual. Several examples of binary linear codes produced by well-known graph classes are given.
\end{abstract}

\renewcommand{\thefootnote}{\fnsymbol{footnote}} 
\footnotetext{\emph{2010 Mathematics Subject Classification: 94B05, 94B25\\  
Keywords: Self-dual codes, Isodual codes, Graphs, Adjacency matrix, Strongly regular graphs}}
\renewcommand{\thefootnote}{\arabic{footnote}} 

\section{Introduction}
There is a strong connection between graphs and codes. The adjacency matrix of a simple graph is a symmetric binary matrix which has made it suitable for constructing binary codes. Depending on the structure of the graphs, special types of codes can be obtained.

We start with some basic definitions about codes that  will be used throughout the paper. Let $\F_2$ be the binary field. A binary linear code $C$ of length $n$ is defined as a subspace of $\F_2^n$. If the dimension of $C$ is $k$, we say $C$ is an $[n,k]$-code. A matrix whose rows form a basis for $C$ is called a {\it generator matrix} for $C$ and is denoted by $G$. We also denote $C$ by $C(G)$. By using elementary row and column operations, we can bring the generator matrix $G$ into a standard form $[I_k|A]$ where $A$ is a $k\times (n-k)$ matrix. Two binary codes are said to be {\it equivalent} if one can be obtained from the other by a permutation of coordinates. 

The {\it Hamming weight} $w_H(\bm x)$ of a vector $\bm x \in \F_2^n$ is defined as the number of non-zero coordinates in $\bm x$. The {\it  Hamming distance} between two vectors $\bm x$ and $\bm y$ in $\F_2^n$, denoted by $d_H(\bm x,\bm y)$, is defined as $$d_H(\bm x,\bm y) = w_H(\bm x-\bm y).$$
The {\it minimum distance} of a code $C$, denoted by $d(C)$, is defined to be the minimum distance between distinct codewords in $C$. We write the standard parameters $[n,k,d]$ to describe a code $C$ where $n$ denotes the length of $C$, $k$ its dimension, and $d$ its minimum distance. 

\begin{definition}
Let $C$ be a binary linear code of length $n$. The
{\it dual} of $C$, denoted by $C^{\perp}$, is given by
$$C^{\perp}:= \large\{ \bm y \in \F_2^n \large \;|\; \langle \bm y,\bm x \rangle = 0 \:\: \forall\:
\bm x \in C \large \}.$$
\end{definition}

 Note that, if $C$ is a linear $[n,k]$-code, then $C^{\perp}$ is a linear $[n,n-k]$-code.

\begin{definition}
A binary linear code $C$ is {\it self-orthogonal} if $C \subseteq C^{\perp}$ and  {\it self-dual} if $C = C^{\perp}.$
\end{definition}

\begin{definition}
Let $C$ be a self-dual binary code. If the Hamming weights of all the codewords in $C$ are divisible by $4$, then $C$ is called {\bf Type II} (or doubly-even), otherwise it is called {\bf Type I} (or singly even).
\end{definition}

The following theorem gives an upper bound for the minimum distance of self-dual codes:

\begin{theorem}$($\cite{Rains}$)$\label{extremal} Let $d_I(n)$ and $d_{II}(n)$ be the minimum distance of a Type I and Type II binary code of length $n$. Then
$$d_{II}(n) \leq 4 \lfloor\frac{n}{24}\rfloor+4$$ and
$$d_{I}(n) \leq \left \{
\begin{array}{ll}
 4 \lfloor\frac{n}{24}\rfloor+4 & \textrm{if $n \not \equiv 22 \pmod{24}$}
\\
4 \lfloor\frac{n}{24}\rfloor+6 & \textrm {if $n \equiv 22 \pmod{24}$.}
\end{array}\right.$$
\end{theorem}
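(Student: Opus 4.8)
The plan is to recognize this as the celebrated \emph{shadow bound}, and to derive it from the invariant theory of weight enumerators together with a nonnegativity constraint coming from the shadow of a self-dual code. The starting point is the MacWilliams identity: since $C=C^{\perp}$, the weight enumerator $W_C(x,y)=\sum_{\bm c\in C}x^{\,n-w_H(\bm c)}y^{\,w_H(\bm c)}$ is fixed by the MacWilliams transform $(x,y)\mapsto\frac{1}{\sqrt2}(x+y,\,x-y)$; because every weight in a binary self-dual code is even, $W_C$ is also fixed by $(x,y)\mapsto(x,-y)$, and for a Type II code it is in addition fixed by $(x,y)\mapsto(x,iy)$. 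I would then invoke Gleason's theorem, which identifies the ring of polynomials invariant under the resulting finite group. For Type II this ring is generated by $g_8=x^8+14x^4y^4+y^8$ and $g_{24}=x^4y^4(x^4-y^4)^4$, so $W_C$ is a linear combination of the $\lfloor n/24\rfloor+1$ monomials $g_8^{\,n/8-3i}g_{24}^{\,i}$ with $0\le i\le\lfloor n/24\rfloor$.

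For the Type II bound I would exploit the triangular structure of this basis: $g_{24}^{\,i}$ has lowest-order term $y^{4i}$, so after normalizing the coefficient of $x^n$ to $1$ there are exactly $\lfloor n/24\rfloor$ free coefficients. Imposing that the numbers of codewords of weights $4,8,\dots,4\lfloor n/24\rfloor$ all vanish gives $\lfloor n/24\rfloor$ linear equations, which uniquely determine an ``extremal'' candidate enumerator. The crux is then to prove that its next coefficient, counting weight $4\lfloor n/24\rfloor+4$, is strictly positive; granting this, any hypothetical code with $d>4\lfloor n/24\rfloor+4$ would force one more vanishing coefficient than the dimension of the invariant space allows, a contradiction. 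This yields $d_{II}(n)\le 4\lfloor n/24\rfloor+4$ and is the classical Mallows--Sloane argument.

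The genuinely new ingredient, and the reason the statement for Type I splits according to $n\bmod 24$, is the shadow. For a Type I code $C$ with doubly-even subcode $C_0$, the shadow $S=C_0^{\perp}\setminus C$ has a weight enumerator $W_S$ obtained from $W_C$ by an explicit linear substitution, and $W_S$ must have nonnegative integer coefficients because it enumerates an actual coset. I would re-run the extremal computation using Gleason's Type I theorem, whose invariant ring is generated by $x^2+y^2$ and $x^2y^2(x^2-y^2)^2$, while simultaneously enforcing $W_S\ge 0$. The nonnegativity of the shadow coefficients is exactly what caps the minimum distance; tracking these coefficients as a function of $n\bmod 24$ shows that the constraint is tight enough to give $4\lfloor n/24\rfloor+4$ in every residue class except $n\equiv 22\pmod{24}$, where it leaves room for one further step, forcing the weaker bound $4\lfloor n/24\rfloor+6$.

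I expect the main obstacle to be the sign analysis rather than the set-up. Establishing the strict positivity of the relevant coefficient of the extremal Type II enumerator, and the nonnegativity of the extremal shadow enumerator, reduces to delicate estimates on alternating sums of binomial coefficients that do not simplify cleanly; isolating precisely the residue class $n\equiv 22\pmod{24}$ as the single exception requires carrying these estimates through all cases with care. The invariant-theoretic framework organizes the problem, but the quantitative coefficient bounds are where the real work lies.
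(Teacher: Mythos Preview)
The paper does not prove this theorem at all: it is quoted from Rains~\cite{Rains} as a known result, with no accompanying argument. There is therefore no ``paper's own proof'' to compare your proposal against.

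That said, your outline is an accurate high-level description of how the result is actually established in the cited reference: Gleason's invariant-theoretic description of self-dual weight enumerators gives the classical Mallows--Sloane bound $4\lfloor n/24\rfloor+4$ for Type~II codes, and Rains' contribution was precisely to introduce the shadow $S=C_0^{\perp}\setminus C$ of a Type~I code and exploit the nonnegativity of its weight enumerator to sharpen the Type~I bound, with the residue class $n\equiv 22\pmod{24}$ emerging as the lone exception. Your caveat about the difficulty is also well placed: the invariant-theoretic setup is routine, but the positivity/nonnegativity of the relevant coefficients is where the genuine technical work lies, and a full proof would need to carry that through rather than assert it. For the purposes of this paper, however, none of that is needed---the theorem is simply imported as background.
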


Self-dual codes that attain the bounds given in the previous theorem are called {\it extremal}.

\begin{definition}
A binary code $C$ is said to be {\it isodual} if it is permutation equivalent to its dual. 
\end{definition}

\begin{theorem}\label{mindist_def}
If C is generated by $G=[I_k|A]$, then the generator matrix of $C^{\perp}$ is given by $H=[-A^T|I_k]$. 
\end{theorem}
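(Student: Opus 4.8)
The plan is to establish the theorem in two moves: first show that every row of $H$ lies in $C^{\perp}$, and then upgrade this containment to an equality by a dimension count. For the first move the natural computation is the block product $GH^T$. Writing $I$ for the identity block (of the size appropriate to make the concatenations $[\,I\mid A\,]$ and $[-A^T\mid I\,]$ well-defined), the transpose of $H=[-A^T\mid I\,]$ is the stacked block $H^T=\left[\begin{smallmatrix}-A\\ I\end{smallmatrix}\right]$, so
$$ G H^T = [\,I \mid A\,]\begin{bmatrix} -A \\ I \end{bmatrix} = I(-A) + A\,I = -A + A = 0. $$
The $(i,j)$ entry of $GH^T$ is exactly the inner product $\langle \bm g_i,\bm h_j\rangle$ of the $i$-th row of $G$ with the $j$-th row of $H$, so this single identity says precisely that every row of $H$ is orthogonal to every row of $G$. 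Since the rows of $G$ form a basis of $C$, bilinearity of $\langle\cdot,\cdot\rangle$ promotes orthogonality to this basis into orthogonality to all of $C$; hence each row of $H$ belongs to $C^{\perp}$, and the row space of $H$ is contained in $C^{\perp}$.

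For the second move I would argue equality by comparing dimensions. The identity block sitting inside $H$ forces its rows to be linearly independent, so $H$ has full row rank and its row space has dimension equal to the number of rows of $H$. Recalling from the remark following the definition of the dual that $C^{\perp}$ is an $[n,n-k]$-code, one checks that $H$ has exactly $n-k$ rows, matching $\dim C^{\perp}$. A subspace contained in $C^{\perp}$ whose dimension already equals $\dim C^{\perp}$ must be all of $C^{\perp}$, and therefore the row space of $H$ is $C^{\perp}$; that is, $H$ is a generator matrix for $C^{\perp}$.

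The computation itself is routine, so I do not expect a genuine obstacle; the only things to handle with care are the bookkeeping items. I would make sure the block sizes are consistent throughout so that both the concatenation $[-A^T\mid I\,]$ and the product $GH^T$ are defined, and I would spell out why the identity block guarantees full row rank rather than merely asserting it. I would also note the cosmetic simplification that over $\F_2$ one has $-A^T=A^T$, so in the binary setting the dual generator is simply $[\,A^T\mid I\,]$; keeping the sign explicit costs nothing and leaves the argument valid over any field. This is the standard fact that a systematic generator matrix $[\,I\mid A\,]$ produces the dual generator (equivalently, parity-check) matrix $[-A^T\mid I\,]$.
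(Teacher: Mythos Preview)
Your proof is correct and follows the standard textbook argument: verify $GH^T=0$ to get containment of the row space of $H$ in $C^{\perp}$, then match dimensions using the identity block in $H$ together with $\dim C^{\perp}=n-k$. The paper itself states this theorem without proof, treating it as a well-known fact from coding theory, so there is no proof in the paper to compare against; your handling of the block sizes (the identity in $H$ must be $I_{n-k}$ for the concatenation to make sense, as you implicitly note) is the only point requiring care, and you address it.
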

$H$ is also called the parity-check matrix of $C$, namely $C$ is given by 
$$C = \{\bm c \in \F_2^n | H\bm c^{T} = 0\}.$$

There is a natural connection between the parity check matrix of a linear code and the minimum distance which is given by the following theorem:

\begin{theorem}Let $C$ be a linear code and $H$ a parity check matrix for $C$. Then
\par{\bf (i)} $d(C)\geq d$ if and only if any $d-1$ columns of $H$ are linearly independent.
\par{\bf (ii)} $d(C)\leq d$ if and only if $H$ has $d$ columns that are linearly dependent.
\end{theorem}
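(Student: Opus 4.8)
The plan is to prove a standard characterization relating minimum distance to linear independence of columns of the parity-check matrix $H$. This is a classical result, so the approach is direct: connect codewords of weight $w$ to linearly dependent sets of $w$ columns of $H$, then translate this correspondence into the two stated inequalities.

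First I would establish the fundamental bridge lemma: a nonzero vector $\bm c \in \F_2^n$ of Hamming weight $w$ lies in $C$ if and only if some $w$ columns of $H$ sum to zero (equivalently, are linearly dependent over $\F_2$). This follows immediately from the parity-check description $C = \{\bm c : H\bm c^T = 0\}$ stated just above: if $\bm c$ has support $S$ with $|S| = w$, then $H\bm c^T = \sum_{i \in S} \bm h_i$ where $\bm h_i$ denotes the $i$-th column of $H$, so $\bm c \in C$ exactly when $\sum_{i \in S} \bm h_i = \bm 0$. Since we are over $\F_2$, a set of columns summing to zero is precisely a linearly dependent set, so the existence of a weight-$w$ codeword is equivalent to the existence of $w$ linearly dependent columns.

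Next I would prove part (ii) directly from this bridge. If $H$ has $d$ columns that are linearly dependent, then some subset of them summing to zero gives a nonzero codeword of weight at most $d$, whence $d(C) \le d$. Conversely, if $d(C) \le d$, take a minimum-weight nonzero codeword; its weight $w \le d$ gives $w$ dependent columns, and padding with $d - w$ additional arbitrary columns (if $d < n$) yields a dependent set of size $d$. For part (i) I would argue the contrapositive or use the direct negation: $d(C) \ge d$ fails exactly when there is a nonzero codeword of weight at most $d-1$, which by the bridge happens exactly when some $d-1$ (or fewer) columns are linearly dependent; thus every set of $d-1$ columns being independent is equivalent to $d(C) \ge d$.

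The main subtlety, rather than obstacle, is bookkeeping with the "at most" versus "exactly" counting of columns, since linear dependence of a smaller set implies dependence of any superset but independence requires care in the padding step. I would handle this by phrasing part (i) in terms of \emph{all} sets of $d-1$ columns being independent, which rules out dependent subsets of every size up to $d-1$ simultaneously, and by noting in part (ii) that dependence is monotone under enlarging the column set (assuming enough columns exist, which holds since $H$ has $n > k$ columns for the nontrivial range of $d$). Assembling these two equivalences completes the proof.
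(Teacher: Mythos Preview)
The paper states this theorem without proof, treating it as a standard coding-theory fact (it is the classical characterization of minimum distance via the parity-check matrix). Your argument is correct and is exactly the standard one: the bridge between weight-$w$ codewords and dependent $w$-subsets of columns via $H\bm c^T=\sum_{i\in S}\bm h_i$, followed by the monotonicity bookkeeping, is precisely how this result is proved in any textbook, so there is nothing to compare.
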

\begin{corollary}
If $C$ is a linear code and $H$ is a parity check matrix for $C$, then $C$ has minimum distance $d$ if and only if any $d-1$ columns of $H$ are linearly independent and some $d$ columns of $H$ are linearly dependent.
\end{corollary}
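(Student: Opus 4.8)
The plan is to derive this directly from the preceding theorem by observing that the phrase ``$C$ has minimum distance $d$'' means precisely $d(C) = d$, and that the equality $d(C) = d$ is logically equivalent to the conjunction of the two inequalities $d(C) \geq d$ and $d(C) \leq d$. Each of these inequalities is then translated into a statement about linear dependence of columns of $H$ using the two parts of the theorem, so that the corollary amounts to conjoining those two translations.

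First I would split the target biconditional by writing its left-hand side as the pair of conditions $d(C) \geq d$ and $d(C) \leq d$. For the forward direction, assuming $d(C) = d$, I would apply part (i) to $d(C) \geq d$ to conclude that any $d-1$ columns of $H$ are linearly independent, and apply part (ii) to $d(C) \leq d$ to conclude that some $d$ columns of $H$ are linearly dependent; these are exactly the two halves of the right-hand side.

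For the reverse direction I would assume both column conditions. The hypothesis that any $d-1$ columns of $H$ are linearly independent gives $d(C) \geq d$ by part (i), while the hypothesis that some $d$ columns of $H$ are linearly dependent gives $d(C) \leq d$ by part (ii). Combining these two inequalities forces $d(C) = d$, completing the equivalence.

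I expect no substantive obstacle here, since all the real content is already carried by the theorem and the corollary merely records the equality case obtained by intersecting the two one-sided bounds. The only point deserving a little care is to ensure that both parts are invoked with the same value of $d$ and that the quantifiers are preserved correctly—``any $d-1$ columns'' remaining universal and ``some $d$ columns'' remaining existential—so that the assembled statement reads as a genuine ``if and only if'' with the intended logical structure.
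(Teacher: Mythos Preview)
Your proposal is correct and matches the paper's own treatment: the paper states this corollary immediately after the theorem without proof, so the intended argument is exactly the one you give, namely conjoining parts (i) and (ii) of the theorem to characterize the equality $d(C)=d$.
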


The connection of codes and graphs has been explored from different aspects in the literature. The main theme in these works is to take a special class of graphs and construct codes from the adjacency matrix of the graph. The structure of the graph may lead to different types of codes as a result, such as self-dual codes, self-orthogonal codes, etc. We refer the reader to \cite{crnkovic1}--\cite{Oral}, \cite{tonchev} and \cite{tonchev2} for some of these works.

In this work we focus on the following type of construction which was discussed in \cite{tonchev2}. Let $A$ be the adjacency matrix of a simple graph on $n$ vertices. We construct the binary code $C$ from the generator matrix $[I_n|A]$. Such a construction has several advantages which we can describe as follows:
\begin{enumerate}
    \item The dimension of the codes is automatically determined as $n$. So we are looking at $[2n,n]$-codes.
    \item The parity-check matrix of such a code is given by $[A^T|I_n] = [A|I_n]$ since $A$ is symmetric
    \item Note that a permutation of columns of $[A|I_n]$ will bring the matrix into $[I_n|A]$, which means any such code $C$ is isodual. 
    \item Since the codes are isodual, to determine the conditions for self-duality, we just need to factor in the orthogonality conditions. 
\end{enumerate}

In addition to the conditions on the graph that would ensure self-duality of the constructed code, we also find upper bounds on the minimum distances of codes obtained via this construction. We give a combinatorial description to the exact minimum distances of such codes as well as codes obtained from just the adjacency matrix $A$ as the generator matrix. We give examples of isodual and self-dual codes obtained through this construction. 

The rest of the work is organized as follows. In section 2, we give the upper bounds and combinatorial descriptions for the minimum distances of codes obtained from graphs. In section 3, we give necessary and sufficient conditions for the codes to be self-dual, Type I and Type II. In addition, we describe how the join operation on graphs affects the self-duality conditions. We give several examples of self-dual codes obtained through the construction.

\section{The $[I_n|A]$ construction for codes}
As was mentioned in the Introduction, we focus mainly on the construction of binary codes generated by $[I_n|A]$ where $A$ is the adjacency matrix of a simple graph. We start with the following observation:
\begin{obs}
Linear codes generated by $[I_n|A]$ and $[I_n|PAP^T]$, where $P$ is a permutation matrix, are not necessarily the same. For example, consider the graph $P_3$ with adjacency matrix $A$ and permutation matrix $P$ generated by $(1,2)$: 
\begin{align*}
[I_3|A]&=\left[ \begin{array}{ccc|ccc}
1 & 0 & 0 & 0 & 1 & 0 \\
0 & 1 & 0 & 1 & 0 & 1 \\
0 & 0 & 1 & 0 & 1 & 0
\end{array} \right]  \\ 
[I_3|PAP^T]&=\left[ \begin{array}{ccc|ccc}
1 & 0 & 0 & 0 & 1 & 1 \\
0 & 1 & 0 & 1 & 0 & 0 \\
0 & 0 & 1 & 1 & 0 & 0
\end{array} \right]
\end{align*}

\begin{align*}
(1,0,0,0,1,0) \in C([I_3|A]) &=\{(a,b,c,b,a+c,b) \;|\; a,b,c\in \F_2 \} \\ 
(1,0,0,0,1,0) \notin C([I_3|PAP^T]) &=\{(a,b,c,b+c,a,a) \;|\; a,b,c\in \F_2 \}.   
\end{align*}
It is not obvious that $C([I_3|A])$ and  $C([I_3|PAP^T])$ have the same minimum distance.
\end{obs}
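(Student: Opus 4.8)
The statement is a negative (``need not coincide'') one, so the plan is to produce a single explicit witness rather than argue in general. A convenient small choice is the path $P_3$ together with the transposition $P=(1\,2)$, exactly as set up in the observation; I would show that the two resulting codes, while coming from the same graph up to relabeling, fail to be equal as subspaces of $\F_2^6$.

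The key simplification is that the rows of $[I_n\,|\,A]$ are precisely the vectors $(\bm e_i,\,\bm e_i A)$, so a generic $\F_2$-linear combination with coefficient vector $\bm m$ produces the codeword $(\bm m,\,\bm m A)$. Hence
\[
C([I_n|A])=\{(\bm m,\bm m A)\;:\;\bm m\in\F_2^n\},\qquad
C([I_n|PAP^T])=\{(\bm m,\bm m PAP^T)\;:\;\bm m\in\F_2^n\}.
\]
First I would write down $A$ and $PAP^T$ for $P_3$ (the latter just swaps the relevant pair of rows and the same pair of columns), and then read off the two parametrizations by multiplying a generic $\bm m=(a,b,c)$ by each matrix; this reproduces the displayed families $(a,b,c,b,a+c,b)$ and $(a,b,c,b+c,a,a)$.

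To separate the codes I would take $\bm m=(1,0,0)$ in the first family, giving $(1,0,0,0,1,0)$, and check that no $(a,b,c)$ makes $(a,b,c,b+c,a,a)$ equal to it: the first three coordinates force $a=1,\ b=c=0$, whereupon the last coordinate must equal $a=1$ rather than $0$, a contradiction. This shows $C([I_3|A])\neq C([I_3|PAP^T])$ and settles the observation.

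This verification is routine and presents no real obstacle; the genuinely subtle point is the final remark about minimum distance, and it is worth addressing why the matter is delicate. Replacing $A$ by $PAP^T$ leaves the identity block untouched, so it is \emph{not} a coordinate permutation of the ambient code, which is exactly why the two codes can differ as sets. The cleanest way to see that the minimum distance is nonetheless preserved is to apply $P^{-1}$ to the first $n$ coordinates and the same $P^{-1}$ to the last $n$ coordinates: the block-diagonal permutation $\diag(P^{-1},P^{-1})$ sends $(\bm m,\bm m A)$ to $(\bm m P^{-1},\bm m A P^{-1})$, and since $P^{-1}=P^T$ this is precisely the codeword of $C([I_n|PAP^T])$ with message $\bm m P^{-1}$. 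Thus the two codes are permutation equivalent and share the same weight enumerator; recognizing this hidden equivalence, rather than the direct computation above, is the real content lurking behind the observation.
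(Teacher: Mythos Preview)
Your verification that the two codes differ is correct and is exactly the paper's own argument: display the two generator matrices, parametrize the codewords as $(\bm m,\bm m A)$ and $(\bm m,\bm m PAP^T)$, and exhibit $(1,0,0,0,1,0)$ as a vector lying in the first code but not the second.

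Your closing paragraph, however, goes beyond the paper. The observation itself only flags that equality of minimum distances is ``not obvious''; the paper defers the proof to a later corollary, where it is deduced from the combinatorial formula $d(C)=\min_{\varnothing\neq S\subseteq V}(|S|+|\von(S)|)$, an isomorphism-invariant quantity. Your argument via the block permutation $\diag(P^{-1},P^{-1})$ is a cleaner, more direct route: it shows outright that $C([I_n|A])$ and $C([I_n|PAP^T])$ are permutation equivalent (hence share the full weight enumerator), without needing the $\von$ machinery. The paper's approach, by contrast, buys a structural description of $d(C)$ in graph-theoretic terms that is useful elsewhere, while yours isolates exactly the equivalence needed here.
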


The following result gives an upper bound of the minimum distance of a binary code in terms of the $2$-rank (see \cite{Abiad}) of the corresponding graph.
\begin{theorem}\label{rk2}
Let $A$ be the adjacency matrix of a graph on $n$ vertices and let $C$ be the binary linear code generated by $[I_n|A]$. Then we have
$$d(C) \leq rk_2(A)+1,$$
where $rk_2(A)$ denotes the rank of $A$ as a matrix over $\F_2$. 
\end{theorem}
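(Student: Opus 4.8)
The plan is to invoke part (ii) of the theorem relating the parity-check matrix to the minimum distance: to prove $d(C)\le r+1$, where I write $r:=rk_2(A)$, it suffices to exhibit $r+1$ columns of a parity-check matrix $H$ that are linearly dependent over $\F_2$. Since $G=[I_n\mid A]$ and $A$ is symmetric, the parity-check matrix is $H=[A^T\mid I_n]=[A\mid I_n]$. Its first $n$ columns are exactly the columns of $A$, and its last $n$ columns are the standard basis vectors $e_1,\dots,e_n$ of $\F_2^n$.

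The main case is $r<n$. Here I would locate the dependent set entirely among the columns of $A$. By the definition of the $2$-rank, the columns of $A$ span an $r$-dimensional subspace of $\F_2^n$, and since $r<n$ the matrix $A$ has at least $r+1$ columns. Any $r+1$ of these columns lie in that $r$-dimensional space and are therefore linearly dependent; as they are columns of $H$, part (ii) of the theorem yields $d(C)\le r+1$.

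The boundary case $r=n$ must be treated separately, because then $A$ has only $n=r$ columns and one cannot extract $r+1$ dependent columns from $A$ alone. In this situation I would instead use that $H$ is an $n\times 2n$ matrix, so all of its columns lie in $\F_2^n$; consequently any $n+1=r+1$ of them are automatically linearly dependent, and part (ii) again gives $d(C)\le n+1=r+1$.

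The only real subtlety, and the one place that needs care, is precisely this split. The sharp bound $r+1$ (rather than the weaker $n+1$ that would follow merely from $\rank H=n$ because of the $I_n$ block) is obtained only by drawing the dependent columns from $A$ when $r<n$, and reserving the identity block for the degenerate case $r=n$. Beyond the symmetry of $A$ and the meaning of its $2$-rank, everything else is routine linear algebra over $\F_2$.
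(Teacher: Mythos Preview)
Your proof is correct and follows essentially the same approach as the paper: identify $H=[A\mid I_n]$ as the parity-check matrix and exhibit $rk_2(A)+1$ linearly dependent columns drawn from the columns of $A$. In fact you are more careful than the paper, which simply asserts that ``any set of $rk_2(A)+1$ columns of $A$ are linearly dependent'' without separately addressing the boundary case $rk_2(A)=n$.
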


\begin{proof}
Let $G=[I_n|A]$ be the generator matrix of $C$. Then $H = [-A^T|I_n] = [A|I_n]$ is the parity-check matrix of $C$. By the definition of the rank, any set of $rk_2(A)+1$ columns of $A$ are linearly dependent. By Theorem \ref{mindist_def}, $d(C) \leq rk_2(A)+1$.    
\end{proof}

To get a combinatorial interpretation of the minimum distance of $C([I_n|A])$, we study the following set of vertices of a graph $\Gamma$ with adjacency matrix $A$ and vertex set $V$: for a nonempty subset $S$ of $V$, the set of vertices of $\Gamma$ with odd number of neighbors in $S$ is denoted by $\von(S)$, i.e.,
$$\von(S)=\{v\in V\;:\: |\n(v)\cap S| \text{ is odd}\},$$
where $\n(v)$ denotes the set of neighbors of the vertex $v$ in $\Gamma$. Note that $S$ and $\von(S)$ have no inclusion-exclusion relationship that holds for all graphs as evident in the following examples.

\begin{example}\blank
\begin{enumerate}
\item Consider $\Gamma=C_4$ with vertices $1,2,3,4$ consecutively adjacent. For $S=\{1\}$, $\von(S)=\{2,4\}=\n(1)$. For $S=\{1,2\}$, $\von(S)=\{1,2,3,4\}$.  For $S=\{1,3\}$, $\von(S)=\varnothing$. 

    \item Consider $\Gamma=K_n,\; n\geq 4$ with vertex set $\{1,2,\ldots,n\}$. For $S=\{1\}$, $\von(S)=\{2,3,\ldots,n\}=\n(1)$. For $S=\{1,2\}$, $\von(S)=\{1,2\}$.   For $S=\{1,2,\ldots,n-1\}$ with even $n$, $\von(S)=\{n\}$. For $S=\{1,2,\ldots,n-1\}$ with odd $n$, $\von(S)=\{1,2,\ldots,n-1\}$. 
    
\end{enumerate}

\end{example}

\begin{definition}
Two vertices $u$ and $v$ of a graph are called {\it duplicate vertices} if they are not adjacent and $\n(u)=\n(v)$, i.e., they have the same neighbors.
\end{definition}

\begin{obs}
Let $\Gamma$ be a graph. If $S$ is a set of two duplicate vertices of $\Gamma$, then $\von(S)=\varnothing$.
\end{obs}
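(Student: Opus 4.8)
The plan is to show directly that \emph{every} vertex of $\Gamma$ lies outside $\von(S)$, that is, that each vertex has an even number of neighbors in $S=\{u,v\}$. Since $S$ has only two elements, for any vertex $w$ the quantity $|\n(w)\cap S|$ can only be $0$, $1$, or $2$, so it suffices to rule out the value $1$ in every case.

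First I would dispose of the case $w\in S$. Because $\Gamma$ is simple it has no loops, so $u\notin\n(u)$ and $v\notin\n(v)$; and because $u,v$ are duplicate vertices they are non-adjacent, so $v\notin\n(u)$ and $u\notin\n(v)$. Hence neither $u$ nor $v$ has any neighbor in $S$, giving $|\n(u)\cap S|=|\n(v)\cap S|=0$, which is even.

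For the remaining case $w\notin S$, the key is the symmetry of adjacency in a simple graph together with the hypothesis $\n(u)=\n(v)$. Indeed, $w$ is adjacent to $u$ exactly when $w\in\n(u)$, and since $\n(u)=\n(v)$ this holds exactly when $w\in\n(v)$, i.e., exactly when $w$ is adjacent to $v$. Thus $w$ is adjacent to both $u$ and $v$, or to neither, so $|\n(w)\cap S|\in\{0,2\}$, again even.

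Combining the two cases, no vertex of $\Gamma$ has an odd number of neighbors in $S$, so $\von(S)=\varnothing$. The argument is short, and the only point that requires care is the case $w\in S$, where both the loop-free assumption and the non-adjacency built into the definition of duplicate vertices are needed; overlooking either would wrongly suggest that $u$ or $v$ might contribute an odd count.
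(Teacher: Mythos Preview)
Your proof is correct and is essentially the same argument as the paper's, only phrased combinatorially rather than algebraically: the paper observes that for duplicate vertices $1,2$ the adjacency-matrix columns satisfy $A_1+A_2\equiv 0\pmod 2$, which is precisely your statement that every vertex has an even number of neighbors in $S$. Your explicit case split $w\in S$ versus $w\notin S$ just makes visible the two coordinate checks (rows $1,2$ versus the rest) that the column identity $A_1=A_2$ handles implicitly.
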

\begin{proof}
Let $A$ be the adjacency matrix of $\Gamma$ and $A_i$ denote the column $i$ of $A$. Without loss of generality let $S=\{1,2\}$. If $1$ and $2$ are duplicate vertices, then $A_1+A_2 \equiv 0 \Mod{2}$ which implies $\von(S)=\varnothing$.
\end{proof}

Linear dependency relations among columns of a matrix associated with graphs have been studied in \cite{minskewrank4}. We study the same in connection with $\von$.
\begin{theorem}
Let $G$ be a graph with vertex set $V$ and adjacency matrix $A$. Let $S$ be a nonempty subset of $V$. If  $\von(S)=\varnothing$, then the columns of $A$ corresponding to $S$ are linearly dependent. Conversely, if the columns of $A$ corresponding to $S$ are minimally linearly dependent, then $\von(S)=\varnothing$.
\end{theorem}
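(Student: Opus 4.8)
The plan is to translate the combinatorial condition $\von(S)=\varnothing$ into the language of $\F_2$-linear algebra and then exploit the fact that over $\F_2$ every linear dependence among columns is simply a subset of those columns summing to zero. Writing $A_s$ for the column of $A$ indexed by the vertex $s$, the key observation is that the $v$-th entry of $\sum_{s\in S}A_s$, computed over $\F_2$, is exactly $|\n(v)\cap S|\Mod{2}$. Hence $v\in\von(S)$ precisely when this entry equals $1$, and therefore $\von(S)=\varnothing$ if and only if $\sum_{s\in S}A_s\equiv 0 \Mod{2}$. This is the same bookkeeping already used in the proof of the duplicate-vertex observation, just carried out for an arbitrary $S$.

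For the forward direction I would simply note that if $\von(S)=\varnothing$ then $\sum_{s\in S}A_s=0$ over $\F_2$. Since $S$ is nonempty, this is a nontrivial $\F_2$-linear relation (the coefficient on each column indexed by $S$ equals $1$, and they are not all zero) among the columns indexed by $S$, so those columns are linearly dependent.

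For the converse, suppose the columns indexed by $S$ are minimally linearly dependent. Over $\F_2$ any dependence relation among them has the form $\sum_{t\in T}A_t=0$ for some nonempty $T\subseteq S$, where $T$ is the support of the $0/1$ coefficient vector realizing the relation. If $T$ were a proper subset of $S$, the columns indexed by $T$ would already be linearly dependent, contradicting minimality; hence $T=S$ and $\sum_{s\in S}A_s=0$. By the correspondence above this gives $\von(S)=\varnothing$.

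The only real subtlety is the converse, and it rests entirely on the $\F_2$ observation that a linear dependence is encoded by the support of its $0/1$ coefficient vector, so minimality then forces that support to be all of $S$. I expect no computational difficulty: the whole argument is the identification of $\von(S)$ with the $\F_2$ column sum plus a one-line use of minimality. It is worth remarking that minimality cannot be dropped in the converse: a merely dependent $S$ may have its dependence supported on a proper subset, so that $\sum_{s\in S}A_s$ need not vanish and $\von(S)$ need not be empty, which is exactly why the two directions of the statement are phrased asymmetrically.
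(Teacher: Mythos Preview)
Your proof is correct and follows essentially the same approach as the paper: both identify the $v$-th entry of $\sum_{s\in S}A_s$ over $\F_2$ with $|\n(v)\cap S|\Mod{2}$, deduce the forward direction immediately, and use minimality to force the full sum $\sum_{s\in S}A_s$ to vanish for the converse. You are slightly more explicit than the paper about why minimality over $\F_2$ forces the support of the dependence relation to be all of $S$, but the argument is the same.
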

\begin{proof}
Suppose $S=\{i_1,i_2,\ldots,i_k\}$ and $\von(S)=\varnothing$. Then 
$$A_{i_1}+A_{i_2}+\cdots+A_{i_k} \equiv 0 \Mod{2}$$
which implies columns $A_{i_1},A_{i_2},\ldots,A_{i_k}$ of $A$ are linearly dependent.

Conversely, suppose $S=\{1,2,\ldots,k\}$ and $A_1,A_2,\ldots,A_k$ are minimally linearly dependent. Then  $A_1+A_2+\cdots+A_k \equiv 0 \Mod{2}$. If $i\in \von(S)$, then 
$$(A_1+A_2+\cdots+A_k)_i \equiv 1 \Mod{2},$$
a contradiction. Thus $\von(S)=\varnothing$.
\end{proof}

\begin{corollary}
Let $\Gamma$ be a graph with vertex set $V$ and adjacency matrix $A$. Let $S$ be a nonempty subset of $V$. The columns of $A$ corresponding to $S$ are minimally linearly dependent if and only if one of the following is true:
\begin{enumerate}
    \item[(a)] $S$ consists of a single isolated vertex of $\Gamma$.
    
    \item[(b)] $\von(S\setminus \{i\})=\n(i)$ for each $i\in S$ and there is no proper subset $S'$ of $S$ for which $\von(S'\setminus \{i\})=\n(i)$ for each $i\in S'$.
\end{enumerate}
\end{corollary}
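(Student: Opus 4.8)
The plan is to translate the purely linear-algebraic notion of minimal linear dependence of the columns indexed by $S$ into the language of $\von$, and then to read off the two cases according to whether $|S|=1$ or $|S|\ge 2$. The starting point is the identity underlying the preceding theorem: for any nonempty $T\subseteq V$ the $v$-th coordinate of $\sum_{i\in T}A_i$ equals $|\n(v)\cap T|\bmod 2$, so that
$$\von(T)=\varnothing \iff \sum_{i\in T}A_i\equiv 0 \Mod{2}.$$
I would first record the companion reformulation that the support of the column $A_i$ is exactly $\n(i)$, so that for each $i\in S$ with $|S|\ge 2$ the condition $\von(S\setminus\{i\})=\n(i)$ is the same as $\sum_{j\in S\setminus\{i\}}A_j\equiv A_i\Mod{2}$, i.e.\ $\sum_{j\in S}A_j\equiv 0\Mod{2}$, i.e.\ $\von(S)=\varnothing$. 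Thus ``$\von(S\setminus\{i\})=\n(i)$ for each $i\in S$'' is merely a restatement of $\von(S)=\varnothing$.

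Next I would pin down what minimal linear dependence means over $\F_2$: since the only nonzero scalar is $1$, a dependence relation is exactly a nonempty subset of columns summing to $0$, so $\{A_i:i\in S\}$ is minimally linearly dependent precisely when $\sum_{i\in S}A_i\equiv 0 \Mod{2}$ while $\sum_{i\in S'}A_i\not\equiv 0 \Mod{2}$ for every nonempty proper $S'\subsetneq S$. Combining this with the two equivalences above recasts minimal dependence as the single statement that $\von(S)=\varnothing$ and no nonempty proper subset $S'$ satisfies $\von(S')=\varnothing$.

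It then remains to split on the size of $S$. When $|S|=1$, say $S=\{i\}$, the condition $\von(S)=\varnothing$ reads $A_i\equiv 0 \Mod{2}$, i.e.\ $\n(i)=\varnothing$, which is exactly case (a) that $i$ is an isolated vertex; there are no proper nonempty subsets, so minimality is automatic and conversely a single zero column is minimally dependent. When $|S|\ge 2$, I apply the reformulation of the first paragraph to rewrite $\von(S)=\varnothing$ as ``$\von(S\setminus\{i\})=\n(i)$ for each $i\in S$'' and, for the forbidden subsets $S'$ of size at least two, to rewrite $\von(S')=\varnothing$ as ``$\von(S'\setminus\{i\})=\n(i)$ for each $i\in S'$'', which is precisely case (b).

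The part that needs the most care is the boundary between (a) and (b), namely the singleton proper subsets appearing implicitly in the minimality clause: if an $S$ with $|S|\ge 2$ contained an isolated vertex $i$, then $\{A_i\}=\{0\}$ would already be dependent and $S$ could not be minimal, so condition (b) silently forces every vertex of $S$ to be non-isolated and the minimality test only ever involves subsets of size at least two. I would make this explicit (or adopt the convention $\von(\varnothing)=\varnothing$) so that the ``for each $i\in S'$'' clause in (b) faithfully encodes $\von(S')=\varnothing$ for all nonempty proper $S'$, after which the corollary follows directly from the chain of equivalences, with (a) covering $|S|=1$ and (b) covering $|S|\ge 2$.
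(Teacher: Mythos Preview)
Your proof is correct and follows the intended route: the paper states this as an unproven corollary of the preceding theorem, and you supply exactly the translation between ``$\von(T)=\varnothing$'' and ``$\sum_{i\in T}A_i\equiv 0\Mod{2}$'' needed to read off the two cases. Your explicit treatment of the boundary issues (the convention $\von(\varnothing)=\varnothing$ and the observation that an isolated vertex in $S$ would violate minimality, so singleton proper subsets are handled) is more careful than the paper itself, which leaves these points implicit.
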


Now we discuss linear dependence among columns of $[A|I_n]$ where $A$ is the adjacency matrix of a graph $\Gamma$ on $n$ vertices.
\begin{theorem}\label{mindist1}
Let $\Gamma$ be a graph on $n$ vertices with vertex set $V$ and adjacency matrix $A$. If $S$ is a nonempty subset of $V$, then the columns of $A$ indexed by $S$ and the columns of $I_n$ indexed by $\von(S)$ are $(|S|+|\von(S)|)$ linearly dependent columns of $[A|I_n]$. Conversely, if the set of columns of $[A|I_n]$ indexed by the set $S'\subseteq \{1,2,\ldots, 2n\}$ is minimally linearly dependent, then it is the union of the columns of $A$ indexed by $S$ and the columns of  $I_n$  indexed by $\von(S)$ for some nonempty subset $S$ of $V$, in other words $S'=S\cup \{n+i \;|\; i\in \von(S) \}$.
\end{theorem}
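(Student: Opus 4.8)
The plan is to exhibit an explicit $\F_2$-dependence for the forward direction and, for the converse, to read the structure of any minimal dependence directly off the definition of $\von$. Throughout I index the first $n$ columns of $[A|I_n]$ by $V$ (so column $j$ is $A_j$) and the last $n$ columns by $n+1,\dots,2n$, where column $n+i$ is the standard basis vector $e_i$. The single computation driving everything is that for any $S\subseteq V$ the $v$-th entry of $\sum_{j\in S}A_j$ equals $|\n(v)\cap S|\bmod 2$, so that over $\F_2$ one has $\sum_{j\in S}A_j=\sum_{i\in\von(S)}e_i$.

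For the forward direction I would simply rearrange this identity to $\sum_{j\in S}A_j+\sum_{i\in\von(S)}e_i=0$ over $\F_2$. Since the columns $A_j$ ($j\in S$) lie in the first block and the columns $e_i$ ($i\in\von(S)$) in the second, these are $|S|+|\von(S)|$ distinct columns of $[A|I_n]$, and because $S$ is nonempty the relation is nontrivial (every $A_j$ appears with coefficient $1$). Hence they are linearly dependent, as claimed; the case $\von(S)=\varnothing$ is included, yielding a dependence among the columns of $A$ alone.

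For the converse, write $S=S'\cap\{1,\dots,n\}$, viewed as a vertex set, and $T_0=\{\,i : n+i\in S'\,\}$. Minimal linear dependence means $S'$ is a circuit: over $\F_2$ there is a unique nontrivial dependence among its columns, and in it every column of $S'$ appears with coefficient $1$ (a column with coefficient $0$ could be deleted, contradicting minimality). This relation reads $\sum_{j\in S}A_j=\sum_{i\in T_0}e_i$, and substituting the identity from the first paragraph gives $\sum_{i\in\von(S)}e_i=\sum_{i\in T_0}e_i$. Since the $e_i$ are independent this forces $T_0=\von(S)$. Finally $S\neq\varnothing$: were $S$ empty, the relation would be a nontrivial dependence among the $e_i$, which is impossible. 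Thus $S'=S\cup\{\,n+i : i\in\von(S)\,\}$ with $S$ nonempty.

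The only real content is the entrywise identity relating $\sum_{j\in S}A_j$ to $\von(S)$, so I do not expect a serious obstacle; the one point worth flagging is the exact role of minimality. In fact any dependence $\sum_{j}c_jA_j+\sum_{i}d_ie_i=0$ already forces $\{\,i : d_i=1\,\}=\von(\{\,j : c_j=1\,\})$ by the same substitution, so minimality is not what produces the $\von$-structure. Rather, it is precisely what guarantees that the supports of the coefficients coincide with $S'$ itself, rather than $S'$ merely containing superfluous columns carrying coefficient $0$.
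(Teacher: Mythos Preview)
Your proof is correct and follows essentially the same approach as the paper: both hinge on the identity $\sum_{j\in S}A_j=\sum_{i\in\von(S)}e_i$ over $\F_2$, and both use minimality to force all coefficients to be $1$ in the converse. Your version is slightly more streamlined, avoiding the paper's case split on whether $T=\varnothing$ and explicitly verifying that $S\neq\varnothing$, but the substance is the same.
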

\begin{proof}
Let $\varnothing \neq S\subseteq V$. 
Let $c$ be the sum of columns of $A$ indexed by $S$. Then $c_i$, the $i$th entry of $c$, is the number of vertices of $S$ adjacent to vertex $i$. Therefore if  vertex $i$ is adjacent to an even number of vertices in $S$, then $c_i\equiv 0 \Mod{2}$. Similarly if  vertex $i$ is adjacent to an odd number of vertices in $S$, then $c_i\equiv 1 \Mod{2}$. Thus the only entries of $c$ that are $1 \Mod{2}$ correspond to  $\von(S)$. So if we add $c$ with the columns of $I_n$ with indices corresponding to $\von(S)$, the sum would be a zero vector. 

Conversely, suppose the set of $d$ columns of $[A|I_n]$ indexed by the set $S'\subseteq \{1,2,\ldots, 2n\}$ is minimally linearly dependent. Without loss of generality suppose $S'$ is the union of $S=\{1,2,\ldots,k\}$, $k\leq n$ and $T\subseteq \{n+1,n+2,\ldots, 2n\}$.

Case 1. $T=\varnothing$ (i.e., $S'=S$)\\
Since $A_1,A_2,\ldots,A_k$ are minimally linearly dependent, $A_1+A_2+\cdots+A_k \equiv 0 \Mod{2}$. It suffices to show that $\von(S)=\varnothing$. If not, let $i\in \von(S)$. Then 
$$(A_1+A_2+\cdots+A_k)_i \equiv 1 \Mod{2},$$
a contradiction.

Case 2. $T\neq \varnothing$\\
Let $T=\{n+i_1,n+i_2,\ldots,n+i_{d-k}\}$ and $e_j$ be  column $j$ of $I_n$ for $j=i_1,i_2,\ldots,i_{d-k}$. Since $A_1,A_2,\ldots,A_k,e_{i_1},e_{i_2},\ldots,e_{i_{d-k}}$ are minimally linearly dependent, $$A_1+A_2+\cdots+A_k+e_{i_2}+\cdots+e_{i_{d-k}} \equiv 0 \Mod{2}.$$
Then 
$$A_1+A_2+\cdots+A_k \equiv e_{i_1}+e_{i_2}+\cdots+e_{i_{d-k}} \Mod{2}$$ 
which implies $\von(S)=\{ i_1,i_2,\ldots,i_{d-k}\}$ because $e_{i_1},e_{i_2},\ldots,e_{i_{d-k}}$ are columns of $I_n$. Thus $S'=S\cup \{n+i \;|\; i\in \von(S) \}$.
\end{proof}

As a consequence of the preceding theorem, we have the following result.

\begin{theorem}\label{mindist2}
Let $A$ be the adjacency matrix of a graph $\Gamma$  on $n$ vertices with vertex set $V$. Let $C$ be the binary linear code generated by $[I_n|A]$. Then the minimum distance $d(C)$ of $C$ is given by
$$d(C) = \min_{\varnothing \neq S\subseteq V} (|S|+|\von(S)|).$$
\end{theorem}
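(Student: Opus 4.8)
The plan is to reinterpret $d(C)$ in terms of linearly dependent columns of the parity-check matrix $H=[A|I_n]$ and then feed this into the structural description already established in Theorem \ref{mindist1}. Recall from the Corollary following the parity-check theorem that, for a linear code with parity-check matrix $H$, the minimum distance equals the smallest number of columns of $H$ that are linearly dependent. Equivalently, $d(C)$ is the minimum cardinality over all minimally linearly dependent sets of columns of $H$, since any dependent set of minimum size is automatically minimally dependent (a proper subset would be a strictly smaller dependent set). Thus the whole task reduces to computing that minimum cardinality, and Theorem \ref{mindist1} tells us exactly what such sets look like.

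For the inequality $d(C)\le \min_{\varnothing\neq S\subseteq V}(|S|+|\von(S)|)$, I would take a subset $S$ achieving the minimum on the right and invoke the forward direction of Theorem \ref{mindist1}: the columns of $A$ indexed by $S$ together with the columns of $I_n$ indexed by $\von(S)$ form a linearly dependent set of size $|S|+|\von(S)|$. Since $d(C)$ is the smallest size of any dependent column set of $H$, it cannot exceed $|S|+|\von(S)|$.

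For the reverse inequality, I would start from an actual minimizer, namely a dependent set of columns of $H$ of size exactly $d(C)$. As noted, such a set is minimally dependent, so the converse direction of Theorem \ref{mindist1} applies and expresses it as $S\cup\{n+i\mid i\in\von(S)\}$ for some nonempty $S\subseteq V$, whose cardinality is $|S|+|\von(S)|$. Hence $d(C)=|S|+|\von(S)|\ge \min_{\varnothing\neq S\subseteq V}(|S|+|\von(S)|)$. Combining the two inequalities yields the claimed equality.

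I do not anticipate a serious obstacle, since the combinatorial content is already packaged in Theorem \ref{mindist1}; the only points requiring care are bookkeeping. First, one must justify applying the converse of Theorem \ref{mindist1} by observing that a smallest dependent set is minimally dependent. Second, because the $n$ columns of $I_n$ are linearly independent, no dependent set can consist solely of $I_n$-columns, which forces the associated $S$ to be nonempty and, together with the fact that $H$ has $2n>n$ columns in $\F_2^n$ (so a dependent set exists), guarantees that the minimum on the right-hand side is well defined.
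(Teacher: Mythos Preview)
Your proposal is correct and follows essentially the same approach as the paper: both proofs identify $H=[A|I_n]$ as the parity-check matrix, translate $d(C)$ into the size of a smallest dependent set of columns of $H$, and then invoke the two directions of Theorem~\ref{mindist1} to obtain the two inequalities. Your write-up is slightly more explicit than the paper's (which phrases the ``$\le$'' direction as a contradiction), and your remarks on why a smallest dependent set is minimally dependent and why $S$ must be nonempty are useful clarifications that the paper leaves implicit.
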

\begin{proof}
First note that $H=[A|I_n]$ is the parity-check matrix of $C$.  By Theorem \ref{mindist1}, a code word in $C$ with weight $d(C)$ corresponds to minimally dependent columns of $H=[A|I_n]$ indexed by $S\cup \{n+i \;|\; i\in \von(S) \}$ for some nonempty subset $S$ of $V$. Then 
$$d(C) \geq  \min_{\varnothing \neq S\subseteq V} (|S|+|\von(S)|).$$
If there is a nonempty subset $S$ of $V$  for which $d(C)> |S|+|\von(S)|$, then by Theorem \ref{mindist1} we find $(|S|+|\von(S)|)$ linearly dependent columns of $H=[A|I_n]$ giving a codeword of $C$ with weight less than $d(C)$, a contradiction. Thus the equality holds.
\end{proof}

\begin{corollary}
Let $A$ be the adjacency matrix of a graph $\Gamma$  on $n$ vertices. Let $P$ be an $n\times n$ permutation matrix. 
Then the binary linear codes generated by $[I_n|A]$ and $[I_n|PAP^T]$ are not necessarily the same but they have the same minimum distance.
\end{corollary}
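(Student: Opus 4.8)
The plan is to handle the two assertions separately. The first assertion---that the two codes need not coincide---requires no fresh argument, since it is exactly what the Observation at the beginning of this section already exhibits: for the path $P_3$ and the transposition $(1,2)$ the codeword $(1,0,0,0,1,0)$ lies in $C([I_3|A])$ but not in $C([I_3|PAP^T])$. So I would simply point back to that example and devote the work to the equality of minimum distances.

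For the second assertion, the key observation is that $PAP^T$ is again a symmetric binary matrix with zero diagonal, hence the adjacency matrix of a graph $\Gamma'$, and moreover $\Gamma'$ is isomorphic to $\Gamma$ via the vertex permutation $\phi$ that $P$ encodes (relabeling vertex $v$ as $\phi(v)$). The strategy is then to invoke Theorem \ref{mindist2}, which expresses each minimum distance purely through the combinatorial quantity $|S|+|\von(S)|$ minimized over nonempty vertex subsets, and to show this quantity is invariant under the isomorphism. Writing $\von_\Gamma$ and $\von_{\Gamma'}$ to keep the two graphs apart, Theorem \ref{mindist2} gives $d(C([I_n|A]))=\min_{\varnothing\neq S\subseteq V}(|S|+|\von_\Gamma(S)|)$ and the analogous expression for $C([I_n|PAP^T])$ in terms of $\von_{\Gamma'}$.

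Concretely, I would first record that $\phi$ preserves adjacency, i.e.\ $\n_{\Gamma'}(\phi(v))=\phi(\n_\Gamma(v))$ for every vertex $v$. From this it follows at once that, for any nonempty $S\subseteq V$, a vertex $w$ has an odd number of neighbors in $\phi(S)$ inside $\Gamma'$ precisely when $\phi^{-1}(w)$ has an odd number of neighbors in $S$ inside $\Gamma$; in symbols $\von_{\Gamma'}(\phi(S))=\phi(\von_\Gamma(S))$. Since $\phi$ is a bijection, $|\phi(S)|=|S|$ and $|\von_{\Gamma'}(\phi(S))|=|\von_\Gamma(S)|$, so $S\mapsto\phi(S)$ is a bijection on nonempty subsets of $V$ that preserves the value of $|S|+|\von(S)|$. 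Hence the two minima in Theorem \ref{mindist2} range over identical sets of values and must coincide, yielding $d(C([I_n|A]))=d(C([I_n|PAP^T]))$.

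The only point that demands any care is the equivariance $\von_{\Gamma'}(\phi(S))=\phi(\von_\Gamma(S))$, where the direction of the permutation action on rows versus columns of $PAP^T$ must be tracked correctly; once adjacency-preservation is stated cleanly this collapses to a one-line parity check, so I do not anticipate a genuine obstacle. Everything past that step is a direct appeal to the formula of Theorem \ref{mindist2}.
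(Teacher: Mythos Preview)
Your proposal is correct and follows exactly the paper's approach: the paper simply observes that $PAP^T$ is the adjacency matrix of a graph isomorphic to $\Gamma$ and invokes Theorem \ref{mindist2}, which is precisely your strategy. You supply the explicit verification that $\von_{\Gamma'}(\phi(S))=\phi(\von_\Gamma(S))$, which the paper leaves implicit, but the underlying argument is identical.
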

\begin{proof}
The graph with adjacency matrix $PAP^T$ is isomorphic to $\Gamma$. Then the binary linear codes generated by $[I_n|A]$ and $[I_n|PAP^T]$ have the same minimum distance by Theorem \ref{mindist2}.
\end{proof}

By Theorem \ref{rk2} and Theorem \ref{mindist2}, we have the following lower bound of the $2$-rank of a graph:
\begin{corollary}
Let $A$ be the adjacency matrix of a graph $\Gamma$  with vertex set $V$. Then $$-1+\min_{\varnothing \neq S\subseteq V} (|S|+|\von(S)|) \leq rk_2(A).$$
\end{corollary}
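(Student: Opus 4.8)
The plan is to chain together the two results explicitly cited in the statement, namely Theorem~\ref{rk2} and Theorem~\ref{mindist2}, and then perform a trivial rearrangement. First I would invoke Theorem~\ref{mindist2}, which identifies the minimum distance of the code $C$ generated by $[I_n|A]$ with the combinatorial quantity appearing on the left, giving the exact equality
$$d(C) = \min_{\varnothing \neq S\subseteq V} (|S|+|\von(S)|).$$
Next I would apply Theorem~\ref{rk2}, which bounds this same minimum distance from above by $rk_2(A)+1$, so that $d(C) \leq rk_2(A)+1$.

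Combining these two facts yields
$$\min_{\varnothing \neq S\subseteq V} (|S|+|\von(S)|) = d(C) \leq rk_2(A)+1,$$
and subtracting $1$ from the outer terms gives precisely the claimed inequality
$$-1 + \min_{\varnothing \neq S\subseteq V} (|S|+|\von(S)|) \leq rk_2(A).$$

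There is essentially no obstacle here: the corollary is a purely formal consequence of substituting the closed form for $d(C)$ from Theorem~\ref{mindist2} into the upper bound of Theorem~\ref{rk2}. The only point worth stating explicitly is that both theorems refer to the \emph{same} code $C$ generated by $[I_n|A]$, so the quantity $d(C)$ is unambiguous and the substitution is legitimate. All the real work has already been done in establishing the combinatorial description of $d(C)$ and the $2$-rank bound, so this final step is simply a matter of recording the two-line derivation.
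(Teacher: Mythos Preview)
Your proposal is correct and follows exactly the approach the paper itself takes: the corollary is stated immediately after Theorem~\ref{mindist2} with the remark that it follows from Theorem~\ref{rk2} and Theorem~\ref{mindist2}, and your two-line substitution of $d(C)=\min_{\varnothing\neq S\subseteq V}(|S|+|\von(S)|)$ into $d(C)\leq rk_2(A)+1$ is precisely that derivation.
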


\begin{question}
Characterize the graphs $\Gamma$ with the adjacency matrix $A$ and  the vertex set $V$ for which
$$rk_2(A)=-1+\min_{\varnothing \neq S\subseteq V} (|S|+|\von(S)|).$$
\end{question}

\begin{example}
The following are examples of binary linear code $C$ generated by $[I_n|A]$ where $A$ is the adjacency matrix of a graph $\Gamma$  on $n$ vertices.
\begin{enumerate}
    \item For $\Gamma=P_n,\; n\geq 2$, $d(C) =2= |S|+|\von(S)|$ where $S=\{1\}$ and $\von(S)=\{2\}$.\\
    For $\Gamma=P_1$, $d(C) =1= |S|+|\von(S)|$ where $S=\{1\}$ and $\von(S)=\varnothing$.
    
    \item When $\Gamma$ is a tree on $n\geq 2$ vertices, $d(C) =2= |S|+|\von(S)|$ where $S=\{v\}$ consisting of a pendant vertex $v$ and $\von(S)=\{w\}$ where $w$ is adjacent to $v$.
    
    \item For $\Gamma=C_n,\; n\geq 5$, $d(C) =3= |S|+|\von(S)|$ where $S=\{1\}$ and $\von(S)=\{2,n\}$.\\
    For $\Gamma=C_4$, $d(C) =2= |S|+|\von(S)|$ where $S=\{1,3\}$ and $\von(S)=\varnothing$.\\
    For $\Gamma=C_3$, $d(C) =3= |S|+|\von(S)|$ where $S=\{1\}$ and $\von(S)=\{2,3\}$.
    
    \item For $\Gamma=K_{n},\; n\geq 4$, $d(C) =4= |S|+|\von(S)|$ where $S=\{1,2\}$ and $\von(S)=\{1,2\}$.
    
    \item For star $\Gamma=K_{1,n},\; n\geq 1$ centered at $1$, $d(C) =2= |S|+|\von(S)|$ where $S=\{2\}$ and $\von(S)=\{1\}$.
    
    \item For  $\Gamma=K_{m,n},\; m \text{ or } n\geq 2$, $d(C) =2= |S|+|\von(S)|$ where $S=\{1,2\}$ and $\von(S)=\varnothing$ because of duplicate vertices 1 and 2. Recall $C_4=K_{2,2}$.
    
    \item For $G=W_n=K_1 \vee C_{n-1},\; n\geq 6$ centered at $1$, $d(C) =4= |S|+|\von(S)|$ where $S=\{2\}$ and $\von(S)=\{1,3,n\}$.\\
    For $\Gamma=W_5$, $d(C) =2= |S|+|\von(S)|$ where $S=\{2,4\}$ and $\von(S)=\varnothing$.\\
    For $\Gamma=W_4$, $d(C) =4= |S|+|\von(S)|$ where $S=\{1\}$ and $\von(S)=\{2,3,4\}$.
    
    \item When $\Gamma$ is the Petersen graph which is the $srg(10, 3, 0, 1)$,  $d(C) =4= |S|+|\von(S)|$ where $S=\{1\}$ and $\von(S)=\n(1)=\{2,5,6\}$ where the outer vertices are 1,2,3,4 in the standard drawing.
    
\end{enumerate}
\end{example}

\begin{remark}
Suppose $V$ is the vertex set of $K_n$ and let $S\subseteq V$. It is easy to observe that 
\begin{equation*}
\von(S) =  \left\{
\begin{array}{ll}
S & \text{if $|S|$ is even} \\
V\setminus S & \text{if $|S|$ is odd.}%
\end{array}%
\right.
\end{equation*}
\end{remark}

\begin{obs}
Let $A$ be the adjacency matrix of a graph $\Gamma$  on $n$ vertices with vertex set $V$. Let $C$ be the binary linear code generated by $[I_n|A]$ and  $S$ be a nonempty subset of $V$  for which $d(C) = |S|+|\von(S)|$. From the preceding remark for $K_n$, we have either $S=\von(S)$ or $S\cap \von(S)=\varnothing$. At least one of these two properties seems to hold for other graphs also.
\end{obs}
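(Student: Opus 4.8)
The plan is to treat the two assertions in the observation separately. For $K_n$ I would simply invoke the preceding Remark and split on the parity of $|S|$: when $|S|$ is even the Remark gives $\von(S)=S$, so $S=\von(S)$; and when $|S|$ is odd it gives $\von(S)=V\setminus S$, so $S\cap\von(S)=\varnothing$. I would emphasize that this dichotomy in fact holds for \emph{every} nonempty $S\subseteq V$, so for $K_n$ the minimality hypothesis $d(C)=|S|+|\von(S)|$ is never actually used; this disposes of the proven part of the observation in essentially one line.

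For the general claim --- the one phrased as a tendency rather than as a theorem --- the natural route is an exchange argument on a minimizer. First I would pass to the codeword picture supplied by Theorem \ref{mindist2}: since $A$ is symmetric, a minimizer $S$ with $T:=\von(S)$ corresponds to a minimum-weight codeword $(\bm x, A\bm x)$ of $C$ with $\operatorname{supp}(\bm x)=S$ and $\operatorname{supp}(A\bm x)=T$, of weight $|S|+|T|=d(C)$. I would then assume for contradiction that $S\neq T$ \emph{and} $S\cap T\neq\varnothing$ simultaneously, and attempt to manufacture either a nonempty $S'$ with strictly smaller $|S'|+|\von(S')|$ (contradicting minimality) or a minimizer of the same weight that does satisfy one of the two structural properties.

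The only obviously available moves are single-coordinate flips $\bm x\mapsto \bm x+\bm e_u$, which alter $S$ by one vertex and replace $A\bm x$ by $A\bm x+A_u$, so that $|\von(S')|=w_H(A\bm x+A_u)$. The hard part will be controlling this last quantity: depending on how the neighborhood of $u$ meets $S$ and $T$, the weight $w_H(A\bm x+A_u)$ can rise or fall, so no single vertex move is guaranteed either to decrease $|S|+|\von(S)|$ or to steer $S$ toward $S=\von(S)$ or toward $S\cap\von(S)=\varnothing$. This is precisely why the statement is recorded only as an empirical tendency; I expect that a genuine proof (or a counterexample) would require a more global handle on the supports of minimum-weight codewords --- for instance exploiting the isoduality of $C$ and the orthogonality conditions developed in Section 3 --- rather than a purely local perturbation.
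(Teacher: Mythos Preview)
Your proposal is correct and matches the paper: the paper gives no proof for this Observation at all --- it simply records the dichotomy for $K_n$ as an immediate consequence of the preceding Remark (exactly your parity split) and states the general claim as an empirical tendency, which is then elevated to the subsequent Conjecture. Your additional discussion of why a local exchange argument fails to settle the general case goes beyond what the paper offers.
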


\begin{conjecture}
Let $A$ be the adjacency matrix of a graph $\Gamma$  on $n$ vertices with vertex set $V$. Let $C$ be the binary linear code generated by $[I_n|A]$. Suppose  $S$ is a nonempty subset of $V$  for which $d(C) = |S|+|\von(S)|$. Then either $S=\von(S)$ or $S\cap \von(S)=\varnothing$.
\end{conjecture}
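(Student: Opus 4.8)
The plan is to argue by contradiction working directly with the minimum-weight codeword. Write $u=\chi_S\in\F_2^n$ for the indicator vector of $S$ and read $A$ modulo $2$; then $\von(S)$ has indicator vector $A\chi_S$, and the codeword of $C$ with message $u$ is $(\chi_S,A\chi_S)$, of weight $w_H(\chi_S)+w_H(A\chi_S)=|S|+|\von(S)|$. By Theorem~\ref{mindist2} a minimizer $S$ corresponds to a minimum-weight codeword $c=(\chi_S,\chi_{\von(S)})$. Setting $T=\von(S)$, the two desired conclusions are exactly $T=S$ (the two halves of $c$ are equal) and $S\cap T=\varnothing$ (the two halves have disjoint supports). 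So I would fix a minimizer $S$ in the \emph{intermediate regime} $S\cap T\neq\varnothing$ and $S\neq T$, and seek a contradiction. Note first that $|S|=1$ is harmless: if $S=\{v\}$ then $T=\n(v)$ and $v\notin\n(v)$, so $S\cap T=\varnothing$; hence in the intermediate regime $|S|\geq 2$.

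Two structural facts drive the argument. The first is a parity lemma: since $A$ is symmetric with zero diagonal, $\langle\chi_S,\chi_{\von(S)}\rangle=\chi_S^{T}A\chi_S=\sum_{i,j\in S}A_{ij}\equiv 0\pmod 2$, so $|S\cap\von(S)|$ is always even; in the intermediate regime this forces $|S\cap T|\geq 2$. The second is a circuit characterization of minimum-weight supports. Because $c$ has minimum weight, the columns of $H=[A\,|\,I_n]$ indexed by its support form a minimal linearly dependent set; unwinding the correspondence in Theorem~\ref{mindist1} (and the discussion preceding it, reading identity columns off $\von$), this is equivalent to the statement that the \emph{only} nonempty $S''\subseteq S$ with $\von(S'')\subseteq\von(S)$ is $S$ itself. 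Thus every proper nonempty $S''\subsetneq S$ satisfies $\von(S'')\not\subseteq T$.

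The engine is a decomposition of $c$ aligned with any splitting of $S$. Since $|S|\geq 2$, choose a partition $S=S_1\sqcup S_2$ into nonempty parts. Using linearity of $\von$ and $S_1\sqcup S_2=S$, we get $\von(S_1)\triangle\von(S_2)=\von(S)=T$, hence $c=c_1+c_2$ with $c_1=(\chi_{S_1},\chi_{\von(S_1)})$ and $c_2=(\chi_{S_2},\chi_{\von(S_2)})$ both codewords. Each piece is nonzero, so $w_H(c_1),w_H(c_2)\geq d(C)$, while $w_H(c_1)+w_H(c_2)=d(C)+2|\von(S_1)\cap\von(S_2)|$; thus the pieces are forced to overlap heavily, $|\von(S_1)\cap\von(S_2)|\geq d(C)/2$. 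On the other hand the circuit characterization gives $\von(S_1),\von(S_2)\not\subseteq T$, and since $\von(S_1)\triangle\von(S_2)=T$ the two sets coincide on $V\setminus T$; hence $X:=\von(S_1)\cap(V\setminus T)=\von(S_2)\cap(V\setminus T)$ is nonempty. Taking $S_1=\{v\}$ for each $v\in S$ specializes this to: every vertex of $S$ has a neighbor outside $\von(S)$. The aim of this step is to play the ``heavy overlap'' conclusion against the ``common external part $X$'' conclusion, together with the parity bound $|S\cap T|\geq 2$, to manufacture a codeword of weight strictly below $d(C)$.

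I expect the decisive obstacle to be precisely this last construction. The quantities $\von(S_i)$, and more fundamentally $\von(\von(S))=A^2\chi_S$, are governed by length-two walk data of $\Gamma$ that is not determined by $S$ and $\von(S)$ alone, so the natural exchange moves alter $|S|+|\von(S)|$ by terms such as $2|\von(S_1)\cap\von(S_2)|$ whose sign is not controlled a priori; the circuit condition only forbids \emph{sub-supports} of $c$, not genuinely new codewords built from other columns of $H$. To force a strict descent I would pass to an extremal minimizer (say one minimizing $|S|$, and among those minimizing $|\von(S)|$), so that every admissible modification is required not to decrease the weight, converting the overlap inequalities above into rigid equalities. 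The hardest and most uncertain part of the plan is then to exhibit an explicit weight-lowering modification of $S$ in the intermediate regime---for instance deleting a carefully chosen even subset of $S\cap\von(S)$ (available by the parity lemma) or absorbing a vertex of $X$---and proving it strictly reduces $|S|+|\von(S)|$; this is where controlling the $A^2$-level data becomes unavoidable, and where I would expect the argument either to close or to reveal the extra hypothesis the conjecture really needs.
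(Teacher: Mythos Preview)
The statement you are attempting to prove is presented in the paper as an open \emph{Conjecture}, not a theorem; the paper offers no proof of it, only the subsequent Observation (on how removing a vertex of $S\cap\von(S)$ interacts with $\n(v)$) as a remark ``helpful for the future work.'' There is therefore no paper proof to compare your proposal against.

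As for the proposal itself, it is a strategy rather than a proof, and you correctly flag its own gap. Your preliminary ingredients are sound: the parity lemma $|S\cap\von(S)|\equiv 0\pmod 2$ is valid (since $\chi_S^{T}A\chi_S$ vanishes over $\F_2$ for symmetric $A$ with zero diagonal), and the circuit characterization---that no proper nonempty $S''\subsetneq S$ has $\von(S'')\subseteq\von(S)$---follows correctly from the fact that a minimum-weight codeword has support forming a minimal dependent column set of $H$. The decomposition $c=c_1+c_2$ then yields the overlap bound $|\von(S_1)\cap\von(S_2)|\geq d(C)/2$ and the nonempty ``external'' set $X$. But none of this produces a codeword of weight strictly below $d(C)$: you never exhibit a concrete modification of $S$ (deletion of an even subset of $S\cap\von(S)$, absorption of a vertex of $X$, or otherwise) together with a proof that $|S'|+|\von(S')|$ strictly decreases. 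The obstruction you name---that such a descent requires control of $A^2\chi_S$ not determined by $S$ and $\von(S)$ alone---is genuine, and your outline does not overcome it. In short, the conjecture remains open in the paper, and your proposal, while containing correct and relevant observations, does not settle it either.
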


The following observation may be helpful for the future work on the preceding conjecture.
\begin{obs}
If $v\in S\cap \von(S)$ and $|S|\geq 2$, then 
$$\von(S\setminus\{v\}) \setminus \n(v)=\von(S) \setminus \n(v) \text{ and } 
\von(S\setminus\{v\}) \cap \von(S) \cap \n(v) =\varnothing.$$
\end{obs}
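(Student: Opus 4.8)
The plan is to reduce both identities to a single parity-counting relation and then split into two cases according to adjacency with $v$. For any vertex $w\in V$ and any $v\in S$, since $v\in S$ we may write $S$ as the disjoint union $(S\setminus\{v\})\cup\{v\}$, so counting the neighbors of $w$ lying in each part gives
$$|\n(w)\cap S| = |\n(w)\cap(S\setminus\{v\})| + [\,w\in \n(v)\,],$$
where $[\,w\in\n(v)\,]$ is the indicator of whether $w$ is adjacent to $v$ (using that $\Gamma$ is simple, so $v\in\n(w)\iff w\in\n(v)$). Reducing modulo $2$, the membership of $w$ in $\von(S)$ versus $\von(S\setminus\{v\})$ is governed entirely by whether $w\in\n(v)$.

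First I would handle the case $w\notin\n(v)$. Here the indicator vanishes, so $|\n(w)\cap S|$ and $|\n(w)\cap(S\setminus\{v\})|$ have the same parity, whence $w\in\von(S)\iff w\in\von(S\setminus\{v\})$. Intersecting this equivalence with $V\setminus\n(v)$ yields exactly the first claimed identity, namely $\von(S\setminus\{v\})\setminus\n(v)=\von(S)\setminus\n(v)$.

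Next I would handle the case $w\in\n(v)$. Now the indicator equals $1$, so the two neighbor-counts have opposite parity, giving the flipped equivalence $w\in\von(S)\iff w\notin\von(S\setminus\{v\})$. Consequently no vertex of $\n(v)$ can lie in both $\von(S)$ and $\von(S\setminus\{v\})$ at once, which is precisely the second claim $\von(S\setminus\{v\})\cap\von(S)\cap\n(v)=\varnothing$.

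The hypotheses play only a mild role: $|S|\geq 2$ guarantees that $S\setminus\{v\}$ is nonempty so that $\von(S\setminus\{v\})$ is defined, while $v\in\von(S)$ is the contextual assumption linking this observation to the preceding conjecture; neither is actually needed for the parity bookkeeping. I therefore expect no genuine obstacle here, since the argument is a clean case split on adjacency to $v$; the only point demanding care is correctly tracking the indicator term and invoking the symmetry $v\in\n(w)\iff w\in\n(v)$ when passing between neighbor-counts and set membership in $\von$.
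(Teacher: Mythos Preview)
Your argument is correct: the identity $|\n(w)\cap S|=|\n(w)\cap(S\setminus\{v\})|+[\,w\in\n(v)\,]$ and the ensuing case split on $w\in\n(v)$ versus $w\notin\n(v)$ cleanly yield both claims, and your remark that the hypotheses $v\in\von(S)$ and $|S|\geq 2$ are not used in the parity computation itself (only to keep $S\setminus\{v\}$ nonempty so that $\von$ is defined) is accurate.

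The paper, however, states this observation without proof, so there is no argument to compare yours against. Your approach is the natural one and would serve perfectly well as the omitted justification.
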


\section{Self-dual codes from graphs}
We start by observing that if $A$ is an $n\times n$ matrix, then the binary code generated by $[I_n|A]$ is a self-dual code if and only if $AA^T=I_n$, where the matrix multiplication is done in $\F_2$. 
If $A$ is the adjacency matrix of a simple graph, then this condition is reduced to $A^2 \equiv I_n \pmod{2}$. We first give some results on the graphs for which this condition is satisfied.

\begin{theorem}\label{main}
Let $\Gamma$ be a graph on $n$ vertices $1,2,\ldots,n$ with adjacency matrix $A$. Then $A^2\equiv I_n \Mod{2}$ if and only if the following are true:
\begin{enumerate}
\item[(a)]  $\deg (i)$ is odd for all vertices $i=1,2,\ldots,n$ (This implies $n$ is even).

\item[(b)] $|N(i)\cap N(j)|$ is even for all vertices $i\neq j$.
\end{enumerate}
\end{theorem}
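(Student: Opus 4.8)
The plan is to compute the entries of $A^2$ directly using their standard combinatorial meaning as counts of length-two walks, and then to translate the matrix congruence $A^2 \equiv I_n \pmod{2}$ into the two stated conditions one entry at a time. First I would record the two basic identities. For the diagonal, $(A^2)_{ii} = \sum_{k=1}^n A_{ik}A_{ki} = \sum_{k=1}^n A_{ik}^2 = \sum_{k=1}^n A_{ik} = \deg(i)$, where I use that $A$ is symmetric with $0/1$ entries so $A_{ik}^2 = A_{ik}$. For $i\neq j$ the off-diagonal entry is $(A^2)_{ij} = \sum_{k=1}^n A_{ik}A_{kj}$, and each summand equals $1$ precisely when the vertex $k$ is a common neighbor of $i$ and $j$, so $(A^2)_{ij} = |N(i)\cap N(j)|$.

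With these identities in hand, the equivalence is a term-by-term reading of the congruence. The matrix condition $A^2 \equiv I_n \pmod{2}$ holds if and only if every diagonal entry of $A^2$ is odd and every off-diagonal entry is even. By the first identity, the diagonal requirement $(A^2)_{ii} \equiv 1 \pmod{2}$ is exactly statement (a), that $\deg(i)$ is odd for every $i$. By the second identity, the off-diagonal requirement $(A^2)_{ij} \equiv 0 \pmod{2}$ for all $i\neq j$ is exactly statement (b), that $|N(i)\cap N(j)|$ is even. Since both directions of the biconditional come from the same entrywise equivalence, this settles the ``if and only if.''

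It then remains to justify the parenthetical claim in (a) that all degrees being odd forces $n$ to be even. For this I would invoke the handshake lemma: $\sum_{i=1}^n \deg(i) = 2|E(\Gamma)|$ is even, so the number of odd-degree vertices is even; if every vertex has odd degree this number is $n$, whence $n$ is even.

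I do not expect a genuine obstacle here, as the whole argument is a direct translation of the two well-known formulas for the entries of $A^2$ followed by a parity reading. The only points needing minor care are that $A_{ik}^2 = A_{ik}$ over the integers, so the diagonal genuinely reads off $\deg(i)$, and that the diagonal of $A$ vanishes for a simple graph, so the terms $k=i$ and $k=j$ contribute nothing to the off-diagonal sum and the common-neighbor count is clean.
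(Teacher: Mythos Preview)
Your proof is correct and follows essentially the same approach as the paper: compute the diagonal and off-diagonal entries of $A^2$ as $\deg(i)$ and $|N(i)\cap N(j)|$ respectively, read off the parity conditions entrywise, and use the handshake lemma for the parenthetical claim. Your write-up is in fact a bit more explicit than the paper's (e.g., noting $A_{ik}^2=A_{ik}$ and that the diagonal of $A$ vanishes), but the argument is the same.
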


\begin{proof}
Suppose $A^2\equiv I_n \Mod{2}$. Then for all $i=1,2,\ldots,n$, $\sum_{k=1}^n a_{ik}^2 \equiv 1 \Mod{2}$, i.e., $\sum_{k=1}^n a_{ik}^2$ is odd and consequently $\deg (i)$ is odd.  Since every graph has an even number of odd-degree vertices, $n$ is even by (a). Since $A^2\equiv I_n \Mod{2}$, for all vertices $i\neq j$, $\sum_{k=1}^n a_{ik}a_{jk}$ is even and consequently $|N(i)\cap N(j)|$ is even. The converse follows by  similar arguments.
\end{proof}

We prove the following lemma which will be used in the proof of the subsequent theorem:
\begin{lemma}\label{LemmaTypeII}
Let $C$ be a self-orthogonal code and assume $\bm c_1, \bm c_2$ are two codewords with 
$$w(\bm c_1) \equiv w(\bm c_2) \equiv 0 \pmod{4}.$$
Then $w(\bm c_1+\bm c_2) \equiv 0 \pmod{4}$. 
\end{lemma}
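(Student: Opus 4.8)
The plan is to reduce everything to the elementary weight identity for the sum of two binary vectors together with the self-orthogonality hypothesis. For $\bm c_1, \bm c_2 \in \F_2^n$, write $\mathrm{supp}(\bm c)$ for the set of coordinates where $\bm c$ is nonzero, and let $x = |\mathrm{supp}(\bm c_1) \cap \mathrm{supp}(\bm c_2)|$ count the positions where both codewords equal $1$. Over $\F_2$, adding $\bm c_1$ and $\bm c_2$ zeroes out exactly those $x$ overlapping coordinates and leaves the rest of each support intact, so the support of the sum is the symmetric difference of the two supports. This gives the key identity
$$w(\bm c_1 + \bm c_2) = w(\bm c_1) + w(\bm c_2) - 2x,$$
which is the only computation the proof really needs.

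Next I would bring in self-orthogonality. Since $C \subseteq C^{\perp}$, we have $\langle \bm c_1, \bm c_2 \rangle = 0$ in $\F_2$. But the standard inner product of two binary vectors, computed before reduction mod $2$, counts precisely the coordinates where both are $1$; that is, $\langle \bm c_1, \bm c_2 \rangle \equiv x \pmod 2$. Hence the vanishing inner product forces $x$ to be even, so $2x \equiv 0 \pmod 4$.

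Finally I would combine the two observations. By hypothesis $w(\bm c_1) \equiv w(\bm c_2) \equiv 0 \pmod 4$, and we have just shown $2x \equiv 0 \pmod 4$, so the identity yields
$$w(\bm c_1 + \bm c_2) = w(\bm c_1) + w(\bm c_2) - 2x \equiv 0 + 0 - 0 \equiv 0 \pmod 4,$$
which is the claim. I do not anticipate a genuine obstacle here: the one point requiring a little care is justifying the weight identity cleanly (keeping track that the doubled overlap term is an honest integer computation, not a mod-$2$ statement), and likewise being explicit that the integer overlap count $x$ has the same parity as the $\F_2$ inner product. Everything else is routine arithmetic modulo $4$.
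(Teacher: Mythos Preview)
Your proof is correct and follows essentially the same approach as the paper: both use the identity $w(\bm c_1+\bm c_2)=w(\bm c_1)+w(\bm c_2)-2x$ with $x$ the overlap count (the paper writes $x$ as $w(\bm c_1\circ\bm c_2)$), observe that self-orthogonality forces $x$ to be even, and conclude modulo $4$. Your write-up is slightly more detailed in justifying the weight identity and the parity of $x$, but the argument is the same.
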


\begin{proof}
Since $C$ is self-orthogonal, we have $\langle \bm c_1, \bm c_2\rangle =0$, which implies the weight $w(\bm c_1\circ \bm c_2)$ of the entry-wise product $\bm c_1\circ \bm c_2$ of $\bm c_1$ and $\bm c_2$ is even. Thus we have 
$$w(\bm c_1+\bm c_2) = w(\bm c_1)+w(\bm c_2) - 2w(\bm c_1\circ \bm c_2) \equiv 0\pmod{4}.$$
\end{proof}

We are now ready to prove the following theorem which gives a necessary and sufficient condition for a graph to generate a Type II code:

\begin{theorem}\label{TypeII}
Let $A$ be the adjacency matrix of a simple graph on $n$ vertices, which satisfies the hypothesis of Theorem \ref{main}. Then $[I_n|A]$ generates a Type II code if and only if $\deg(v) \equiv 3  \pmod{4}$ for all vertices $v$ of the graph.  
\end{theorem}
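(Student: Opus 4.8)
The plan is to reduce the Type II question to a statement about the rows of the generator matrix $[I_n|A]$, using self-orthogonality to propagate weight divisibility from generators to arbitrary codewords. Under the hypothesis of Theorem \ref{main} we have $A^2\equiv I_n \Mod{2}$, and since $A$ is symmetric this means $AA^T=I_n$ over $\F_2$; hence $C=C([I_n|A])$ is self-dual and, in particular, self-orthogonal. Let $r_1,\dots,r_n$ denote the rows of $[I_n|A]$, so that $r_i=(e_i \mid A_i)$ where $e_i$ is the $i$th standard basis vector and $A_i$ is row $i$ of $A$. Because the graph is simple, $a_{ii}=0$, so the single $1$ contributed by $e_i$ sits in a coordinate disjoint from the $\deg(i)$ ones contributed by $A_i$. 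Therefore $w(r_i)=1+\deg(i)$, and consequently $w(r_i)\equiv 0 \Mod{4}$ if and only if $\deg(i)\equiv 3 \Mod{4}$.

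For the forward direction, suppose $\deg(v)\equiv 3 \Mod{4}$ for every vertex $v$, so that each generator $r_i$ has weight divisible by $4$. I would show that every codeword has weight divisible by $4$ by induction on the number of generators being summed. Any $\bm c\in C$ can be written as $\bm c=\sum_{i\in T}r_i$ for some $T\subseteq\{1,\dots,n\}$; writing $\bm c=\bm c'+r_j$ with $\bm c'=\sum_{i\in T\setminus\{j\}}r_i$, the inductive hypothesis gives $w(\bm c')\equiv 0 \Mod{4}$, and since $\bm c'$ and $r_j$ are both codewords of the self-orthogonal code $C$, Lemma \ref{LemmaTypeII} yields $w(\bm c)=w(\bm c'+r_j)\equiv 0 \Mod{4}$. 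Hence $C$ is Type II.

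The converse is immediate: if $C$ is Type II, then every codeword, and in particular each generator $r_i$, has weight divisible by $4$, so $1+\deg(i)\equiv 0 \Mod{4}$ and thus $\deg(i)\equiv 3 \Mod{4}$ for all $i$. The only step demanding any care is the induction in the forward direction, which upgrades the two-codeword statement of Lemma \ref{LemmaTypeII} to an arbitrary sum of generators; this is routine precisely because self-orthogonality (guaranteed by the hypothesis of Theorem \ref{main}) makes the set of codewords whose weight is divisible by $4$ closed under addition, so that it suffices to check the condition on a spanning set.
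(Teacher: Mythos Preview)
Your proof is correct and follows essentially the same route as the paper: both argue that each row $r_i$ of $[I_n|A]$ has weight $1+\deg(i)$, and then use induction on the number of generators summed together with Lemma~\ref{LemmaTypeII} (which requires only self-orthogonality, supplied by the hypothesis of Theorem~\ref{main}) to propagate divisibility by $4$ from the rows to all codewords. Your write-up is in fact a bit more explicit than the paper's, spelling out why $w(r_i)=1+\deg(i)$ and why the converse holds.
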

\begin{proof}
The necessity being clear, we proceed to proving the sufficiency. 

Suppose $\deg(v)\equiv 3 \pmod{4}$ for all vertices $v$. Then each row of $G = [I_n|A]$ has weight divisible by $4$. Suppose the rows of $G$ are denoted by $\bm r_1, \bm r_2, \ldots, \bm r_n$. So we have $w(\bm r_i) \equiv 0 \pmod{4}$ for $i=1,2,\ldots, n$. Then we claim that all the codewords will have weight divisible by $4$. Note that every codeword of $C([I_n|A])$ is obtained from a sum of the form 
$\bm r_{i_1}+\bm r_{i_2}+\dots +\bm r_{i_k}$, where 
$1\leq i_1<i_2<\dots < i_k\leq n$ and $k\geq 1$. We proceed by induction on $k$.

If $k=1$, then we have just the rows $\bm r_i$, which all have weights divisible by 4. 

Assume the assertion to be true for all sums with $k\geq 1$ summands. 

Let 
$$\bm c=\bm r_{i_1}+\bm r_{i_2}+\dots +\bm r_{i_k}+\bm r_{i_{k+1}} = \left (\bm r_{i_1}+\bm r_{i_2}+\dots +\bm r_{i_k}\right) + \bm r_{i_{k+1}}.$$
By induction hypothesis, $\left (\bm r_{i_1}+\bm r_{i_2}+\dots +\bm r_{i_k}\right)$ has weight divisible by $4$. Note that both $\left (\bm r_{i_1}+\bm r_{i_2}+\dots +\bm r_{i_k}\right)$ and $\bm r_{i_{k+1}}$ are codewords in $C$, which is self-dual and both codewords have weights divisible by $4$. So by Lemma \ref{LemmaTypeII}, the weight of $c$ is divisible by $4$. 
\end{proof}

Since Type II binary self-dual codes only exist for lengths that are multiple of $8$, we have the following combinatorial result as a consequence of the previous theorem:
\begin{corollary}
Let $n\equiv 2 \pmod{4}$. Then a simple graph on $n$ vertices that satisfies the hypotheses of Theorem \ref{main}  has at least one vertex whose degree is $1 \pmod{4}$.  
\end{corollary}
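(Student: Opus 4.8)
The plan is to argue by contradiction, leveraging Theorem \ref{TypeII} together with the divisibility constraint on Type II codes recalled immediately before the corollary. First I would record the relevant consequences of the hypotheses of Theorem \ref{main}: condition (a) forces every vertex degree to be odd, so each degree is congruent to either $1$ or $3$ modulo $4$. Moreover, since the hypotheses of Theorem \ref{main} are equivalent to $A^2 \equiv I_n \pmod 2$, the code $C([I_n|A])$ is self-dual, and it has length $2n$.

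Next I would suppose toward a contradiction that no vertex has degree $\equiv 1 \pmod 4$. Combined with the observation that all degrees are odd, this means every vertex has degree $\equiv 3 \pmod 4$, which is exactly the sufficient condition appearing in Theorem \ref{TypeII}. Applying that theorem, $[I_n|A]$ generates a Type II code. I would then invoke the stated fact that a binary Type II self-dual code exists only when its length is a multiple of $8$. Since the length is $2n$, this forces $2n \equiv 0 \pmod 8$, i.e. $n \equiv 0 \pmod 4$, contradicting the hypothesis $n \equiv 2 \pmod 4$. Hence at least one vertex must have degree $\equiv 1 \pmod 4$.

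The argument is essentially mechanical once the right facts are assembled, so I do not anticipate a genuine obstacle. The only point requiring a moment of care is the length bookkeeping: the code has length $2n$ rather than $n$, so divisibility of the length by $8$ translates into the condition $n \equiv 0 \pmod 4$, which is precisely what is incompatible with $n \equiv 2 \pmod 4$. In effect the corollary is the contrapositive of Theorem \ref{TypeII} read against the existence theorem for Type II codes, specialized to the residue class $n \equiv 2 \pmod 4$.
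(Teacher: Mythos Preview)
Your proposal is correct and follows precisely the approach the paper intends: invoke Theorem~\ref{TypeII} under the assumption that every degree is $3\pmod 4$, then derive a contradiction from the fact that Type~II codes exist only at lengths divisible by $8$ while here the length is $2n\equiv 4\pmod 8$. The paper states the corollary as an immediate consequence without writing out details, and your write-up supplies exactly those details.
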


In the following theorem, we explore the special case of complete graphs. 
\begin{theorem}
Let $C$ be the code generated by $[I_n|A]$, where $A$ is the adjacency matrix of $K_n$. $C$ is a self-dual code if and only if $n$ is even.  Moreover, if $n\geq 4$ we have
\par {\bf a)} $C$ is a Type II self-dual code of parameters $[2n,n,4]$ if $n$ is divisible by $4$.
\par {\bf b)} $C$ is a Type I self-dual code of parameters $[2n,n,4]$ if $n$ is not divisible by $4$. 
\end{theorem}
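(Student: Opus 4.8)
The plan is to treat the self-duality claim and the ``moreover'' part separately, leaning on the tools already developed. For $K_n$ the adjacency matrix is $A=J_n-I_n$, where $J_n$ is the all-ones matrix; thus every vertex has degree $n-1$ and any two distinct vertices have exactly $n-2$ common neighbors.

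First I would settle self-duality. By the observation recorded just before Theorem~\ref{main}, $[I_n|A]$ generates a self-dual code precisely when $A^2\equiv I_n\Mod{2}$. I would then invoke Theorem~\ref{main}: its condition (a) requires $n-1$ to be odd and its condition (b) requires $n-2$ to be even, and both hold exactly when $n$ is even. Equivalently, $A^2=(J_n-I_n)^2=(n-2)J_n+I_n\equiv nJ_n+I_n\Mod{2}$, which equals $I_n$ over $\F_2$ iff $n$ is even. This yields the first assertion.

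Next, assume $n\geq 4$ is even, so $C$ is self-dual. For the Type I versus Type II dichotomy I would apply Theorem~\ref{TypeII}: since every degree equals $n-1$, the criterion $\deg(v)\equiv 3\Mod{4}$ holds for all $v$ exactly when $n\equiv 0\Mod{4}$, giving part (a). When $n\equiv 2\Mod{4}$ this criterion fails, but $C$ is nonetheless self-dual, so it is Type~I by definition, giving part (b).

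Finally, for the minimum distance I would combine Theorem~\ref{mindist2} with the explicit formula for $\von(S)$ in $K_n$ established earlier, namely $\von(S)=S$ when $|S|$ is even and $\von(S)=V\setminus S$ when $|S|$ is odd. For nonempty $S$ with $|S|$ even we get $|S|+|\von(S)|=2|S|\geq 4$, while for $|S|$ odd we get $|S|+|\von(S)|=|S|+(n-|S|)=n\geq 4$. Hence the minimum over all nonempty $S$ equals $4$, attained at $S=\{1,2\}$, so $d(C)=4$ and the parameters are $[2n,n,4]$. This last computation is the crux of the argument, but once the $\von(S)$ formula is in hand it reduces to the clean parity dichotomy above; the only point needing care is the lower bound $d(C)\geq 4$, which that dichotomy secures by simultaneously ruling out every small $S$.
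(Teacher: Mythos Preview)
Your proof is correct and follows essentially the same approach as the paper: both invoke Theorem~\ref{main} for self-duality, Theorem~\ref{TypeII} for the Type~I/II distinction, and Theorem~\ref{mindist2} for the minimum distance. The only minor difference is that you handle $d(C)=4$ uniformly for both parities of $n/2$ via the $\von(S)$ formula for $K_n$, whereas the paper in case (a) instead uses the fact that Type~II forces $d\geq 4$ together with an explicit weight-$4$ codeword, reserving the $\von$-based case analysis for part~(b).
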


\begin{proof}
By Theorem \ref{main}, $C$ is self-dual if and only if $n$ is even. 

\par{\bf a)} If $n=4k$, then the degree of every vertex of $K_n$ is $4k-1$, which, by Theorem \ref{TypeII}, implies that the code generated by $[I_n|A]$ is Type II. This means $d(C)\geq 4$. But the sum of any two rows of $[I_n|A]$ has weight 4, which means $d(C)=4$. 

\par {\bf b)} If $n=4k+2$ with $k\geq 1$, then every row of $[I_n|A]$ has weight $4k+2$, which makes $C$ Type I. To find the minimum distance, we use Theorem \ref{mindist2}. Let $S$ be a nonempty subset of the vertices of $K_n$. If $S = \{x\}$, then $|\von(S)| = n-1$, which means $|S|+|\von(S)| = n \geq 4$. 

If $S = \{x,y\}$, then $\von(S)=S$, which means $|S|+|\von(S)| = 4$.

If $S = \{x, y, z\}$, then $|\von(S)| = n-3$, which means $|S|+|\von(S)| = n \geq 4$. 

If $|S| \geq 4$, then $|S|+|\von(S)| \geq 4$. 
Thus the minimum distance is 4.
\end{proof}

\begin{corollary}
The code generated by $K_n$ is an extremal Type II self-dual code for $n=4$ and $n=8$. The code generated by $K_n$ is an extremal Type I self-dual code for $n=6$.  
\end{corollary}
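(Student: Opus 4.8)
The plan is to combine the structural information from the preceding theorem with the extremality bound of Theorem~\ref{extremal}, applied to codes of length $2n$. First I would recall that for the complete graph $K_n$ with $n \geq 4$, the preceding theorem states that $C([I_n|A])$ is a self-dual $[2n,n,4]$-code, which is Type II when $4 \mid n$ and Type I otherwise. In particular, for $n=4$ and $n=8$ the code is Type II with minimum distance $4$, and for $n=6$ it is Type I with minimum distance $4$. All the needed parameters and types are thus inherited directly from the previous result.

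Next I would substitute the length $\ell = 2n$ into the bounds of Theorem~\ref{extremal}. For $n=4,6,8$ the length is $\ell = 8, 12, 16$ respectively, and in each case $\lfloor \ell/24 \rfloor = 0$ since $\ell < 24$. Hence for the Type II cases ($n=4$ and $n=8$) the bound reads $d_{II}(\ell) \leq 4\cdot 0 + 4 = 4$, and for the Type I case ($n=6$) one checks that $12 \not\equiv 22 \pmod{24}$, so the applicable bound is $d_{I}(12) \leq 4\cdot 0 + 4 = 4$.

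Finally, since each of these codes has minimum distance exactly $4$, coinciding with the corresponding upper bound, the codes attain the bound of Theorem~\ref{extremal} and are therefore extremal of the stated type. I do not expect any genuine obstacle: the argument reduces to a direct evaluation of the floor functions, the only points meriting (trivial) verification being that $2n < 24$ for each $n \in \{4,6,8\}$ so that the floor term vanishes, and that the $n=6$ case ($\ell=12$) lies in the generic congruence class rather than the exceptional $22 \pmod{24}$ class that would raise the Type I bound to $6$.
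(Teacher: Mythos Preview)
Your proposal is correct and is precisely the argument the paper intends: the corollary is stated without proof, as it follows immediately from the preceding theorem (giving the parameters $[2n,n,4]$ and the type) together with the Rains bound of Theorem~\ref{extremal} evaluated at lengths $8$, $12$, and $16$. Your verification of the floor term and the congruence condition for $n=6$ is exactly what is needed.
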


\begin{example}
Consider the regular graph $\Gamma=2K_4$  which is the $srg(8, 3, 2, 0)$  with the following adjacency matrix $A$. Since $A^2\equiv I_8 \Mod{2}$ and each vertex of $\Gamma$ has degree $3$, the binary code $C=C([I_8|A])$  is an extremal Type II self-dual $[16,8,4]$ code by Theorems \ref{main}, \ref{TypeII}, and \ref{extremal}. 
$$A=\left[ \begin{array}{cccccccc}
0& 0& 0& 1& 0& 1& 0& 1\\
0& 0& 1& 0& 1& 0& 1& 0\\
0& 1& 0& 0& 1& 0& 1& 0\\
1& 0& 0& 0& 0& 1& 0& 1\\
0& 1& 1& 0& 0& 0& 1& 0\\
1& 0& 0& 1& 0& 0& 0& 1\\
0& 1& 1& 0& 1& 0& 0& 0\\
1& 0& 0& 1& 0& 1& 0& 0 
\end{array}\right]$$

\end{example}

\begin{theorem}
Let $\Gamma$ be a strongly regular graph with parameters $(n,k,\lambda,\mu)$ and adjacency matrix $A$. Suppose $C$ is a linear code generated by $[I_n|A]$. Then $C$ is self-dual if and only if $k$ is odd and $n,\lambda,\mu$ are even.
\end{theorem}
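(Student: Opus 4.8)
The plan is to reduce self-duality of $C$ to a purely matrix-theoretic condition and then translate that condition into the strongly regular parameters via Theorem \ref{main}. As observed at the start of this section, since $A$ is the symmetric binary adjacency matrix of a simple graph, the code generated by $[I_n|A]$ is self-dual if and only if $A^2 \equiv I_n \pmod 2$. So it suffices to characterize when $A^2 \equiv I_n \pmod 2$ for a strongly regular $A$, and then read off the parameter conditions.

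First I would invoke Theorem \ref{main}, which states that $A^2 \equiv I_n \pmod 2$ holds if and only if (a) $\deg(i)$ is odd for every vertex $i$, and (b) $|N(i)\cap N(j)|$ is even for all $i \neq j$. All the remaining work is simply to rewrite (a) and (b) using the defining properties of a strongly regular graph.

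For (a): a strongly regular graph is $k$-regular, so $\deg(i)=k$ for every $i$, and hence (a) is equivalent to $k$ being odd. Moreover, as already noted in the proof of Theorem \ref{main}, every graph has an even number of odd-degree vertices; since all $n$ vertices here have degree $k$, the oddness of $k$ immediately forces $n$ to be even. This accounts for the ``$n$ even'' part of the conclusion, which is therefore a derived consequence of $k$ being odd rather than an independent requirement. For (b): in a strongly regular graph two adjacent vertices have exactly $\lambda$ common neighbors and two non-adjacent vertices have exactly $\mu$ common neighbors, so $|N(i)\cap N(j)| \in \{\lambda,\mu\}$ for every pair $i\neq j$. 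Provided the graph has at least one edge and at least one non-edge, condition (b) is then equivalent to both $\lambda$ and $\mu$ being even. Combining the two translations, $A^2 \equiv I_n \pmod 2$ holds if and only if $k$ is odd and $n,\lambda,\mu$ are all even, which is exactly the claim.

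The routine checks are all immediate, so I do not anticipate any substantial obstacle; the entire argument is essentially a translation of Theorem \ref{main} through the definition of strong regularity. The only point requiring a moment of care is the degenerate boundary case in which either $\lambda$ or $\mu$ governs no actual pair of vertices (a complete or edgeless graph), where the corresponding evenness condition becomes vacuous. I would flag this explicitly and confirm that it does not disturb the stated equivalence for a genuine strongly regular graph possessing both edges and non-edges.
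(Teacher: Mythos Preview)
Your proposal is correct and follows essentially the same route as the paper: reduce self-duality to $A^2\equiv I_n\pmod 2$, invoke Theorem \ref{main}, and translate conditions (a) and (b) into the strongly regular parameters $k$ odd and $\lambda,\mu$ even (with $n$ even forced by the handshaking argument). Your added remark about the degenerate complete/edgeless cases is a reasonable caveat that the paper's proof does not mention, but otherwise the arguments are the same.
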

\begin{proof}
For a strongly regular graph with parameters $(n,k,\lambda,\mu)$, $\deg(i) =k$ for all $i=1,2,\ldots,n$ and $|N(i)\cap N(j)|$ is $\lambda$ or $\mu$ for all $i\neq j$. Thus $A^2\equiv I_n \Mod{2}$ if and only if $k$ is odd and $\lambda,\mu$ are even by Theorem \ref{main}. 
\end{proof}

\begin{question}
Let $\Gamma$ be a strongly regular graph with parameters $(n,k,\lambda,\mu)$ and adjacency matrix $A$. Find the minimum distance of the linear code $C([I_n|A])$ in terms of $n,k,\lambda,\mu$.
\end{question}

Now we explore effects of graph operations on corresponding linear codes. In particular, we study the join $\Gamma_1\vee \Gamma_2$ of two graphs $\Gamma_1$ and $\Gamma_2$ with disjoint vertex sets $V_1$ and $V_2$ respectively. Note that $\Gamma_1\vee \Gamma_2$ has the vertex set $V_1\cup V_2$ and the edge set consisting of all edges of $\Gamma_1$ and $\Gamma_2$ together with all edges between them. For example, $K_{m,n}$ is the join of $mK_1$ and $nK_1$.

\begin{theorem}
Let $\Gamma_1$ and $\Gamma_2$ be two graphs on $n_1$ and $n_2$ vertices with adjacency matrices $A_1$ and $A_2$ respectively. Suppose $\Gamma_1\vee \Gamma_2$ is the join of $\Gamma_1$ and $\Gamma_2$ with adjacency matrix $A$. If $C([I_{n_1}|A_1])$ and $C([I_{n_2}|A_2])$ are self-dual codes, then so is $C([I_{n_1+n_2}|A])$. 
\end{theorem}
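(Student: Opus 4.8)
The plan is to reduce everything to the matrix criterion recorded at the start of this section: for the adjacency matrix $B$ of a simple graph on $m$ vertices, the code $C([I_m|B])$ is self-dual if and only if $B^2\equiv I_m\pmod 2$. The hypotheses thus give $A_1^2\equiv I_{n_1}\pmod 2$ and $A_2^2\equiv I_{n_2}\pmod 2$, and the goal is to verify $A^2\equiv I_{n_1+n_2}\pmod 2$, where $A$ is the adjacency matrix of $\Gamma_1\vee\Gamma_2$.

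First I would extract the parity data that these hypotheses secretly carry. By Theorem \ref{main}, the relations $A_1^2\equiv I_{n_1}$ and $A_2^2\equiv I_{n_2}$ force every vertex of $\Gamma_1$ and of $\Gamma_2$ to have odd degree, and in turn force both $n_1$ and $n_2$ to be even. These two facts --- odd degrees and even orders --- are exactly what the block computation below will consume.

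Next I would write $A$ in block form. Since the join adds every edge between $V_1$ and $V_2$,
\[
A=\begin{pmatrix} A_1 & J \\ J^{T} & A_2 \end{pmatrix},
\]
where $J$ is the $n_1\times n_2$ all-ones matrix. Squaring gives
\[
A^2=\begin{pmatrix} A_1^2+JJ^{T} & A_1 J + J A_2 \\ J^{T}A_1 + A_2 J^{T} & J^{T}J + A_2^2 \end{pmatrix},
\]
and it suffices to check each block modulo $2$. For the diagonal blocks, $JJ^{T}$ is the $n_1\times n_1$ matrix with every entry equal to $n_2$, which is $\equiv 0\pmod 2$ since $n_2$ is even; combined with $A_1^2\equiv I_{n_1}$ this makes the top-left block $\equiv I_{n_1}$, and symmetrically the bottom-right block is $\equiv I_{n_2}$ using that $n_1$ is even. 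For the off-diagonal blocks, the $(i,k)$ entry of $A_1 J$ equals $\deg_{\Gamma_1}(i)$ while the $(i,k)$ entry of $J A_2$ equals $\deg_{\Gamma_2}(k)$; since all these degrees are odd, both $A_1 J$ and $J A_2$ reduce to the all-ones matrix modulo $2$, so their sum vanishes, and the same reasoning (or transposition) kills the bottom-left block. Hence $A^2\equiv I_{n_1+n_2}\pmod 2$, and $C([I_{n_1+n_2}|A])$ is self-dual.

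The only real subtlety, and the step I would present most carefully, is the bookkeeping in the second paragraph: the cancellations in $A^2$ do not follow from self-duality of the two pieces as an abstract algebraic fact but rely precisely on the induced parities (degrees odd, orders even) supplied by Theorem \ref{main}. Once those parities are in hand, the block computation is entirely routine.
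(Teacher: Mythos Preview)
Your proof is correct and follows essentially the same approach as the paper's own proof: both write $A$ in block form, invoke Theorem~\ref{main} to obtain that $n_1,n_2$ are even and all degrees are odd, and then verify $A^2\equiv I_{n_1+n_2}\pmod 2$ block by block using these parities.
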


\begin{proof}
First note that
$$A=\left[ \begin{array}{cc}
A_1& J_{n_1,n_2}\\
J_{n_2,n_1}& A_2
\end{array}\right]
\text{ and }
A^2=\left[ \begin{array}{cc}
n_2J_{n_1,n_1}+A_1^2 & A_1 J_{n_1,n_2} +J_{n_1,n_2}A_2 \\
J_{n_2,n_1}A_1+A_2 J_{n_2,n_1}& n_1J_{n_2,n_2}+A_2^2
\end{array}\right].$$

Suppose $C([I_{n_1}|A_1])$ and $C([I_{n_2}|A_2])$ are self-dual codes. By Theorem \ref{main},  $A_1^2\equiv I_{n_1} \Mod{2}$ and $A_2^2\equiv I_{n_2} \Mod{2}$ which imply $n_1\equiv n_2\equiv 0 \pmod{2}$ and degree of each vertex in $\Gamma_1$ and $\Gamma_2$ is  $1 \pmod{2}$. Then 
$$A_1 J_{n_1,n_2} +J_{n_1,n_2}A_2 \equiv J_{n_1,n_2} \pmod{2}+J_{n_1,n_2} \pmod{2} \equiv O_{n_1,n_2} \pmod{2},$$
$$J_{n_2,n_1}A_1+A_2 J_{n_2,n_1}\equiv J_{n_2,n_1} \pmod{2}+J_{n_2,n_1}\pmod{2} \equiv O_{n_2,n_1} \pmod{2}.$$

$$A^2=\left[ \begin{array}{cc}
n_2J_{n_1,n_1}+A_1^2 & A_1 J_{n_1,n_2} +J_{n_1,n_2}A_2 \\
J_{n_2,n_1}A_1+A_2 J_{n_2,n_1}& n_1J_{n_2,n_2}+A_2^2
\end{array}\right]\equiv I_{n_1+n_2}\pmod{2}.$$

Thus $C([I_{n_1+n_2}|A])$ is a self-dual code by Theorem \ref{main}.
\end{proof}

The following theorem describes the type of the join of self-dual codes. 
\begin{theorem}
Let $\Gamma_1$ and $\Gamma_2$ be two graphs on $n_1$ and $n_2$ vertices and with generator matrices $A_1$ and $A_2$ respectively. Suppose that $A$ is the generator matrix of $\Gamma_1 \vee \Gamma_2$.  
\begin{enumerate}
    \item[(a)] When $n_1\equiv n_2 \equiv 0 \pmod{4}$,   $C([I_{n_1+n_2}|A])$ is Type II if and only if both $C([I_{n_1}|A_1])$ and $C([I_{n_2}|A_2])$ are Type II.
    
    \item[(b)] When $n_1\equiv n_2 \equiv 2 \pmod{4}$, if $C([I_{n_1+n_2}|A])$ is Type II, then both $C([I_{n_1}|A_1])$ and $C([I_{n_2}|A_2])$ are Type I and the converse is true if each vertex of $\Gamma_1$ and $\Gamma_2$ has degree $1 \pmod{4}$.
    
    \item[(c)]If exactly one of $n_1$ and $n_2$ is divisible by $4$, then $C([I_{n_1+n_2}|A])$ is Type I.
\end{enumerate}
\end{theorem}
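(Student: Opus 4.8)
The plan is to collapse the whole problem into a single degree criterion and then split into the three congruence cases. Since the preceding theorem guarantees that $C([I_{n_1+n_2}|A])$, $C([I_{n_1}|A_1])$ and $C([I_{n_2}|A_2])$ are all self-dual, Theorem \ref{TypeII} applies to each of them: a self-dual code of this form is Type II precisely when every vertex of the underlying graph has degree $\equiv 3 \pmod 4$, equivalently when every row of the relevant $[I|A]$ has weight divisible by $4$. Thus the entire statement becomes a bookkeeping exercise on vertex degrees modulo $4$.

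First I would record how degrees change under the join. A vertex $v\in V_1$ is adjacent in $\Gamma_1\vee\Gamma_2$ to its old neighbors in $\Gamma_1$ together with all of $V_2$, so its join-degree is $\deg_{\Gamma_1}(v)+n_2$; symmetrically a vertex $w\in V_2$ has join-degree $\deg_{\Gamma_2}(w)+n_1$. I would also recall from the self-duality hypothesis, via Theorem \ref{main}, that every degree in $\Gamma_1$ and $\Gamma_2$ is odd and that $n_1,n_2$ are even, so each component degree is congruent to $1$ or $3$ modulo $4$.

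For part (a), with $n_1\equiv n_2\equiv 0 \pmod 4$, the join-degrees satisfy $\deg_{\Gamma_1}(v)+n_2\equiv \deg_{\Gamma_1}(v)$ and $\deg_{\Gamma_2}(w)+n_1\equiv \deg_{\Gamma_2}(w) \pmod 4$, so all join-degrees are $\equiv 3 \pmod 4$ iff all component degrees are, which by the criterion is exactly the assertion that both component codes are Type II. For part (b), with $n_1\equiv n_2\equiv 2 \pmod 4$, every join-degree is shifted by $2$, so a vertex contributes $\equiv 3 \pmod 4$ iff its component degree is $\equiv 1 \pmod 4$. If the join is Type II, then every component degree must be $\equiv 1$, forcing each row of $[I_{n_1}|A_1]$ and $[I_{n_2}|A_2]$ to have weight $\equiv 2 \pmod 4$; hence neither component code is Type II, and being self-dual both are Type I. The converse cannot be run off the word ``Type I'' alone, because Type I only guarantees that \emph{some} vertex has degree $\not\equiv 3$, whereas forcing the join to be Type II requires \emph{every} vertex to have degree $\equiv 1 \pmod 4$ — which is precisely the extra hypothesis imposed, and under it the shift by $2$ sends all degrees to $\equiv 3$, yielding Type II.

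For part (c), I would bypass the degree computation and invoke the length obstruction: a binary Type II code exists only when its length is divisible by $8$. Here exactly one of $n_1,n_2$ is divisible by $4$ while the other, being even, is $\equiv 2 \pmod 4$, so $n_1+n_2\equiv 2 \pmod 4$ and the length $2(n_1+n_2)\equiv 4 \pmod 8$ is not divisible by $8$. Hence $C([I_{n_1+n_2}|A])$ cannot be Type II, and being self-dual it is Type I. The point needing the most care is the asymmetry between the two implications in part (b); the length-parity remark is what makes part (c) immediate, and I would flag explicitly that the ``degree $\equiv 1 \pmod 4$'' hypothesis in (b) is genuinely stronger than ``Type I'' and cannot be dropped.
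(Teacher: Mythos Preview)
Your proof is correct and follows essentially the same approach as the paper: both reduce parts (a) and (b) to the degree criterion of Theorem \ref{TypeII} via the observation that join-degrees are the component degrees shifted by $n_2$ or $n_1$, and both handle part (c) by the length obstruction $2(n_1+n_2)\equiv 4 \pmod 8$. Your write-up is in fact more careful than the paper's, which dispatches (a) and (b) in a single sentence; your explicit discussion of why the extra hypothesis in (b) is genuinely needed for the converse is a useful addition.
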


\begin{proof}
 First we observe that for any vertex $v$ in $\Gamma_1$, the degree of $v$ in $\Gamma_1\vee \Gamma_2$ is $n_2+deg(v)$. Similarly for any vertex $w$ in $\Gamma_2$, the degree of $w$ in $\Gamma_1\vee \Gamma_2$  is $n_1+deg(w)$. Then the cases $n_1\equiv n_2 \equiv 0 \pmod{4}$ and  $n_1\equiv n_2 \equiv 2 \pmod{4}$ follow from Theorem \ref{TypeII}.

Now consider the case when exactly one of $n_1$ and $n_2$ is divisible by $4$. Then we have $n_1+n_2\equiv 2 \pmod{4}$, which implies $2(n_1+n_2) \equiv 4 \pmod{8}$. Since Type II codes only exist for lengths that are multiples of 8,  $C([I_{n_1+n_2}|A])$ is not Type II, hence Type I. 
\end{proof}

We end by the following results about the minimum distance of $C([I_{n_1+n_2}|A])$ and its connection to the minimum distances of $C([I_{n_1}|A_1])$ and $C([I_{n_2}|A_2])$.

\begin{theorem}
Let $\Gamma_1$ and $\Gamma_2$ be two graphs with disjoint vertex sets $V_1$ and $V_2$ of sizes $n_1$ and $n_2$ respectively. Let $A_1$, $A_2$, and $A$ be the adjacency matrices of $\Gamma_1$, $\Gamma_2$, and $\Gamma_1\vee \Gamma_2$ respectively. Suppose that $d_1, d_2$, and $d$ are the minimum distances of the codes generated by $[I_{n_1}|A_1]$, $[I_{n_2}|A_2]$, and $[I_{n_1+n_2}|A]$ respectively. 
\begin{enumerate}
    \item[(a)] Suppose  $S_i$ is a nonempty subset of $V_i$  for which $d_i= |S_i|+|\von(S_i)|$ for $i=1,2$. If $|S_i|$ is even for some $i=1,2$, then
$$d\leq d_i.$$
    
    If $|S_1|$ and $|S_2|$ are odd, then 
$$d\leq
\min \{n_2+d_1,n_1+d_2 \}.$$

    \item[(b)] Suppose $S=S_1\cup S_2$ is a nonempty subset of $V_1\cup V_2$ for which $d = |S|+|\von(S)|$ where $\varnothing \neq S_1\subseteq V_1$ and $\varnothing \neq S_2\subseteq V_2$. If at least one of $|S_1|$ and $|S_2|$ is even, then
    $$d_1+ d_2 \leq d.$$
\end{enumerate}

\end{theorem}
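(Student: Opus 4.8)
The plan is to reduce both parts to the combinatorial formula of Theorem~\ref{mindist2}, after first pinning down how $\von$ transforms under the join. Write $S=S_1\cup S_2$ with $S_1\subseteq V_1$, $S_2\subseteq V_2$, and let $\von_{\Gamma_i}$ denote the operator computed inside $\Gamma_i$ alone. In $\Gamma_1\vee\Gamma_2$ every vertex of $V_1$ is adjacent to all of $V_2$, so for $v\in V_1$ we have $|\n(v)\cap S|\equiv|\n(v)\cap S_1|+|S_2|\pmod 2$, and symmetrically for $v\in V_2$. The first step is therefore to record
$$\von_{\Gamma_1\vee\Gamma_2}(S)\cap V_1=\begin{cases}\von_{\Gamma_1}(S_1)&\text{if }|S_2|\text{ is even},\\ V_1\setminus\von_{\Gamma_1}(S_1)&\text{if }|S_2|\text{ is odd},\end{cases}$$
together with the mirror identity for the $V_2$-part, whose two cases are governed instead by the parity of $|S_1|$. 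Combined with $|S|=|S_1|+|S_2|$, these two identities drive everything.

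For part (a) I would feed suitable test sets into the upper bound $d\le|T|+|\von_{\Gamma_1\vee\Gamma_2}(T)|$ coming from Theorem~\ref{mindist2}. Suppose $|S_i|$ is even, say $i=1$, and take $T=S_1$ (so the $V_2$-part of $T$ is empty, of even size $0$). The identities then keep all of $\von$ inside $V_1$ and equal to $\von_{\Gamma_1}(S_1)$, whence $|T|+|\von_{\Gamma_1\vee\Gamma_2}(T)|=|S_1|+|\von_{\Gamma_1}(S_1)|=d_1$ and $d\le d_1$; the case $i=2$ is identical. If instead both $|S_1|$ and $|S_2|$ are odd, the same choice $T=S_1$ now forces every vertex of $V_2$ into $\von_{\Gamma_1\vee\Gamma_2}(T)$, so $|\von_{\Gamma_1\vee\Gamma_2}(T)|=|\von_{\Gamma_1}(S_1)|+n_2$ and $d\le d_1+n_2$; the symmetric choice $T=S_2$ gives $d\le d_2+n_1$, yielding the claimed minimum.

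For part (b) I would apply the same identities in the reverse direction to the minimizing set $S=S_1\cup S_2$. The favorable case is $|S_1|$ and $|S_2|$ both even: then each $\von$-contribution stays inside its own side, and
$$d=\big(|S_1|+|\von_{\Gamma_1}(S_1)|\big)+\big(|S_2|+|\von_{\Gamma_2}(S_2)|\big)\ge d_1+d_2,$$
since $S_1,S_2$ are admissible nonempty test sets and Theorem~\ref{mindist2} bounds each bracket below by $d_1$, $d_2$ respectively.

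The step I expect to be the real obstacle is the mixed-parity situation, say $|S_1|$ even and $|S_2|$ odd. Here the $V_1$-contribution flips to its complement, and after isolating the clean bound $|S_2|+|\von_{\Gamma_2}(S_2)|\ge d_2$ one is left needing $|S_1|+\big(n_1-|\von_{\Gamma_1}(S_1)|\big)\ge d_1$. This does \emph{not} follow from the definition of $d_1$: the complemented quantity $n_1-|\von_{\Gamma_1}(S_1)|$ is the size of $V_1\setminus\von_{\Gamma_1}(S_1)$, and summing the columns of $A_1$ indexed by $S_1$ and adding the characteristic vector of $V_1\setminus\von_{\Gamma_1}(S_1)$ produces the all-ones vector $\mathbf 1$ rather than $0$, so this relation need not describe a codeword of $C([I_{n_1}|A_1])$. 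Making this case go through seems to require either strengthening the hypothesis to ``$|S_1|$ and $|S_2|$ both even'' or proving a separate lower bound for $|T|+|V_1\setminus\von_{\Gamma_1}(T)|$ over even $T$; settling this is where the difficulty concentrates, and I would want to double-check the statement against small joins (for instance a universal-vertex join of a graph possessing an even set $T$ with $\von_{\Gamma_1}(T)=V_1$) before committing to the present hypotheses.
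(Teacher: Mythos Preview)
Your approach is exactly the paper's: both parts are reduced to the $\von$ identities under the join together with Theorem~\ref{mindist2}. Your part~(a) argument and the ``both even'' subcase of part~(b) match the paper's proof line for line.

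For the mixed-parity subcase of~(b) your hesitation is justified, and in fact the paper's argument there is erroneous. In the paper's Case~2 ($|S_1|$ odd, $|S_2|$ even) it asserts that $\von(S)$ equals $\von_{\Gamma_1}(S_1)\cup V_2$ and then writes
\[
d_1+d_2 \;\le\; |S_1|+|S_2|+|\von_{\Gamma_1}(S_1)|+|V_2| \;=\; d.
\]
But by the identities you recorded (which are correct), when $|S_1|$ is odd the $V_2$-part of $\von(S)$ is $V_2\setminus\von_{\Gamma_2}(S_2)$, not all of $V_2$; hence the final ``$=d$'' overcounts by $|\von_{\Gamma_2}(S_2)|$. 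The symmetric Case~3 has the same defect. Thus the obstacle you isolate---needing $|S_2|+\bigl(n_2-|\von_{\Gamma_2}(S_2)|\bigr)\ge d_2$ (or the analogous inequality on the other side), which does not follow from Theorem~\ref{mindist2}---is a genuine gap in the paper's proof, not a deficiency of your write-up. Your suggestion to test the hypotheses on small explicit joins before committing is the right instinct.
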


\begin{proof}
(a) We prove this by the following cases:

\noindent Case 1. $|S_1|$ is even.\\
For $S=S_1 \subseteq V_1\cup V_2$ in $\Gamma_1\vee \Gamma_2$, we have $\von(S)=\von(S_1)$ in $\Gamma_1$. Then by Theorem \ref{mindist2},
$$d\leq |S_1|+|\von(S_1)|=d_1.$$

\noindent Case 2. $|S_2|$ is even.\\
For $S=S_2 \subseteq V_1\cup V_2$ in $\Gamma_1\vee \Gamma_2$, we have $\von(S)=\von(S_2)$ in $\Gamma_2$. Then by Theorem \ref{mindist2},
$$d\leq |S_2|+|\von(S_2)|=d_2.$$

\noindent Case 3. $|S_1|$ and $|S_2|$ are odd.\\
In $\Gamma_1\vee \Gamma_2$, let $S=S_1$. Thus $\von(S)=\von(S_1)\cup V_2$. Then by Theorem \ref{mindist2},
$$d\leq |S_1|+|\von(S_1)|+|V_2|=n_2+d_1.$$
Similarly
$$d\leq |S_2|+|\von(S_2)|+|V_1|=n_1+d_2.$$

\bigskip
(b) We prove this by the following cases:

\noindent Case 1. $|S_1|$ and $|S_2|$ are even.\\
    $\von(S)$ is the union of $\von(S_1)$ in $\Gamma_1$ and $\von(S_2)$ in $\Gamma_2$. Then by Theorem \ref{mindist2},
    $$d_1+d_2\leq |S_1|+|S_2|+|\von(S_1)|+|\von(S_2)|= d.$$
    
\noindent Case 2. $|S_1|$ is odd and $|S_2|$ is even.\\
    $\von(S)$ is the union of $V_2$ and $\von(S_1)$ in $\Gamma_1$. Then by Theorem \ref{mindist2},
$$d_1+ d_2 \leq |S_1|+|S_2|+|\von(S_1)|+|V_2|=d.$$
    
\noindent Case 3. $|S_1|$ is even and $|S_2|$ is odd.\\
    Similar to Case 2, we have 
    $$d_1+ d_2\leq d.$$

\end{proof}


\begin{thebibliography}{99}

\bibitem{Abiad} A. Abiad, W. H. Haemers, ``Switched symplectic graphs and their 2-ranks", \emph{Des. Codes Crypt.}, vol. 81, no. 1, pp. 35--41, 2016.

\bibitem{crnkovic1} D. Crnkovi\' c, B.G. Rodrigues, S. Rukavina and L. Sim\v ci\' c, ``Ternary codes from the strongly regular (45, 12, 3, 3) graphs and orbit matrices of 2-(45, 12, 3) designs", \emph{Discrete Math.}, vol. 312, no. 20, pp. 3000--3010, 2012.  

\bibitem{crnkovic} D. Crnkovi\' c, M. Maximovi\' c, B. Rodrigues and S. Rukavina, ``Self-orthogonal codes from the strongly regular graphs on up to 40 vertices", \emph{Adv. Math. Communications}, vol. 10, no. 3, pp. 555--582, 2016.

\bibitem{dankelmann} P. Dankelmann, J.D. Key and B. G. Rodrigues, ``A Characterization of Graphs by Codes from their Incidence Matrices", \emph{Elect. J. Combinatorics}, vol. 20, no. 3, P18, 2013. 

\bibitem{Fish} W. Fish, R. Fray and E. Mwambene, ``Binary codes from the complements of the triangular graphs", \emph{Quaestiones Mathematicae}, vol. 33, no. 4, pp. 399--408, 2010. 

\bibitem{forney} G. D. Forney, ``Codes on Graphs: Fundamentals", 	arXiv:1306.6264 

\bibitem{GOdsil} C. D. Godsil and G. F. Royle, ``Chromatic Number and the 2-Rank of a Graph
", \emph{J. Comb. Series B}, vol. 81, pp. 142--149, 2001. 

\bibitem{Grassl} M. Grassl and M. Harada, ``New self-dual additive $\F_4$-codes constructed from circulant graphs", \emph{Discrete Math.}, vol. 340, no. 3, pp.399--403, 2017.

\bibitem{Key} J.D. Key and B.G. Rodrigues, ``LCD codes from adjacency matrices of graphs", \emph{Appl. Alg. Eng. Comm. Comp.}, vol. 29, no. 3, pp.227--244, 2018.

\bibitem{kumwenda} K. Kumwenda and E. Mwambene, ``Codes from graphs related to the categorical product of triangular graphs and $K_n$", \emph{IEEE Trans. Inform. Theory Workshop}, ITW 2010 Dublin. 

\bibitem{minskewrank4} S. Mallik and B. L. Shader, ``Classes of graphs with minimum skew rank 4", {\it Linear Algebra Appl.} 439 (2013) 3643--3657.

\bibitem{Oral} H. Oral, ``Self-dual Codes and Graphs", {\bf Thesis}, Simon Frasier University, 1989. 

\bibitem{Rains} E.M. Rains, \textquotedblleft Shadow Bounds for Self Dual
Codes", \emph{IEEE Trans. Inf. Theory}, vol.44, pp.134--139, 1998.

\bibitem{tonchev} V. Tonchev, ``Error-correcting codes from graphs", \emph{Discrete Math.}, vol. 257, no. 2-3, pp.549--557, 2002.

\bibitem{tonchev2} V. Tonchev, ``Rank-3 Graphs, Block Designs, and Codes with Unequal Symbol Protection", \emph{Problemy Peredaci Informatsii}, vol. 17, no. 2, pp.89--93, 1981.  
\end{thebibliography}
\end{document}